\theoremstyle{definition}
	\newtheorem{thm}{Theorem}
	\newtheorem{prop}[thm]{Proposition}
	\newtheorem{conj}[thm]{Conjecture}
\newcommand\myshade{85}
\colorlet{mylinkcolor}{red}
\colorlet{mycitecolor}{blue}
\colorlet{myurlcolor}{Aquamarine}
\patchcmd{\section}{\scshape}{\bfseries}{}{}
\renewcommand{\@secnumfont}{\bfseries}
\newcommand{\ZZ}{\mathbb{Z}}
\newcommand{\sC}{\mathscr{C}}
\newcommand{\sN}{\mathscr{N}}
\newcommand{\sS}{\mathscr{S}}
\DeclareMathOperator{\wt}{wt}
\numberwithin{equation}{section}
\newcommand{\ignore}[1]{}
\newcommand{\ii}{{\bf{i}}}
\begin{document}
\thispagestyle{empty}
\title{Principal subspaces of basic modules for twisted affine Lie algebras, $q$-series multisums, and Nandi's identities}

\author{Katherine Baker}
\address{Department of Mathematics and Computer Science, Ursinus College,
601 E Main St, Collegeville, PA 19426}
\email{kabaker2@ursinus.edu}

\author{Shashank Kanade}
\address{Department of Mathematics, University of Denver, Denver, CO 80208}
\email{shashank.kanade@du.edu}

\author{Matthew C. Russell}
\address{Department of Mathematics, 
University of Illinois Urbana-Champaign,
Urbana, IL 61801}
\email{mcr39@illinois.edu}

\author{Christopher Sadowski}
\address{Department of Mathematics and Computer Science, Ursinus College,
601 E Main St, Collegeville, PA 19426}
\email{csadowski@ursinus.edu}

\begin{abstract} 
 We provide an observation relating several known and conjectured $q$-series
 identities to the theory of principal subspaces of basic modules for
 twisted affine Lie algebras. We also state and prove two new families of
 $q$-series identities. The first family provides quadruple sum
 representations for Nandi's identities, including a manifestly positive
 representation for the first identity. The second is a family of new mod 10
 identities connected with principal characters of level 4 integrable,
 highest-weight modules of $\mathrm{D}_4^{(3)}$. 
\end{abstract}

\maketitle

\renewcommand\contentsname{\textbf{Contents}}
\setcounter{tocdepth}{1}
\tableofcontents

\section{Introduction}

Principal subspaces of standard (i.e., highest-weight and integrable) modules for untwisted affine Lie algebras were introduced and studied by Feigin and Stoyanovsky~\cite{FS1,FS2}, and their study from a vertex-algebraic point of view has been developed by Calinescu, Capparelli, Lepowsky, and Milas~\cite{CLM1,CLM2,CalLM1,CalLM2,CalLM3}, and many others. In particular, the graded dimensions of principal subspaces are interesting due to their connection to various partition identities and recursions they satisfy. The study of principal subspaces for standard modules of twisted affine Lie algebras was initiated by Calinescu, Lepowsky, and Milas~\cite{CalLM4}, and further developed in works by Calinescu, Milas, Penn, and the fourth author~\cite{CalMPe,PS1,PS2,CPS}. The multigraded dimensions for principal subspaces of basic (i.e., standard module with highest weight $\Lambda_0$, see, for example, Carter's book \cite[P.\ 508]{Car-book}) modules for twisted affine Lie algebras are well-known, and have been studied in several of those papers~\cite{CalMPe,PS1,PS2}. In particular, they take the form
\begin{equation}\label{twistedsum}
\sum_{{\bf m} \in (\mathbb{Z}_{\ge 0})^d}\frac{q^{\frac{{\bf m}^T A[\nu] {\bf m}}{2}}}{(q^{\frac{k}{l_1}};q^{\frac{k}{l_1}})_{m_1}\cdots (q^{\frac{k}{l_d}};q^{\frac{k}{l_d}})_{m_d}}x_1^{m_1}\cdots x_d^{m_d}
\end{equation}
where $A[\nu]$ is a matrix obtained by ``folding" a Cartan matrix $A$ of type $\mathrm{A}$, $\mathrm{D}$, 
or $\mathrm{E}$ by a Dynkin Diagram automorphism of order $k$, and $l_1,\dots,l_d$ are the sizes of the orbits of various simple roots (this folding is defined generally by Penn, Webb, and the fourth author~\cite{PSW}). The matrices $A[\nu]$ are symmetrized Cartan matrices of types $\mathrm{B}$, $\mathrm{C}$, $\mathrm{F}$, $\mathrm{G}$, and, in the case $\mathrm{A}_{2n}^{(2)}$, of the tadpole Dynkin diagram.

Another recently active field of research is finding and proving Rogers--Ramanujan-type (multi)sum-to-product identities corresponding mainly to the principal characters of various affine Lie algebras.
Here, the principal characters refer to principally specialized characters divided by a certain factor depending on the affine Lie algebra in question. 
For more on this terminology, see the works of the second and third authors~\cite{KanRus-a22}, \cite{KanRus-cylindric} or Sills' textbook~\cite{Sil-book}.
This use of ``principal'' is not to be confused with the ``principal'' subspaces mentioned above; these correspond to completely different notions.
The second and third authors conjectured identities~\cite{KanRus-stair} regarding principal characters of level $2$ standard modules of $\mathrm{A}_9^{(2)}$, which were later proved by Bringmann, Jennings-Shaffer, and Mahlburg~\cite{BriJenMah} and Rosengren~\cite{Ros}; Takigiku and Tsuchioka~\cite{TakTsu-a22} provided various results on levels $5$ and $7$ of $\mathrm{A}_2^{(2)}$ and some conjectures on level $2$ standard modules of $\mathrm{A}_{13}^{(2)}$;
the authors~\cite{KanRus-a22} proved identities for all standard modules of $\mathrm{A}_2^{(2)}$. Andrews, Schilling, and Warnaar~\cite{ASW}, Corteel, Dousse, and Uncu~\cite{CorDouUnc}, Warnaar~\cite{War2}, the second and third authors~\cite{KanRus-cylindric}, and Tsuchioka~\cite{Tsu-a21level3} all provided conjectures and/or proved results on identities related to the standard modules of $\mathrm{A}_2^{(1)}$.
Finally, Griffin, Ono, and Warnaar~\cite{GriOnoWar} demonstrated  many identities for (not necessarily principal) characters for a variety of affine Lie algebra modules. For an excellent overview of Rogers--Ramanujan-type identities, we refer the reader to the textbook of Sills~\cite{Sil-book}.

This work grew out of the following observations: The graded dimension of the principal subspaces of the basic $\mathrm{A}_{2n}^{(2)}$ module was studied by Calinescu, Milas, and Penn~\cite{CalMPe} and is given by 
\begin{equation}\label{A2n-sum}
\sum_{{\bf m} \in (\mathbb{Z}_{\ge 0})^d}\frac{q^{\frac{{\bf m}^T T_d {\bf m}}{2}}}{(q;q)_{m_1}\cdots (q;q)_{m_d}}
\end{equation}
where $T_d$ is the Cartan matrix of the tadpole Dynkin diagram. Meanwhile, Calinescu, Penn, and the fourth author~\cite{CPS} conjectured that the graded dimension of the principal subspace of the level $k$ 
standard $\mathrm{A}_2^{(2)}$ module having highest weight $k\Lambda_0$ is given by the sum side of
\begin{equation}\label{stem-sum}
\sum_{{\bf m} \in (\mathbb{Z}_{\ge 0})^d}\frac{q^{\frac{{\bf m}^T 2T_d^{-1} {\bf m}}{2}}}{(q^2;q^2)_{m_1}\cdots (q^2;q^2)_{m_d}},
\end{equation}
which is the sum side in Stembridge's generalization of the G\"ollnitz--Gordon--Andrews identities~\cite{S} (see also ~\cite{BIS} and ~\cite{War1} for more general sums of this form). This conjectured graded dimension has been proved by Takenaka~\cite{Ta}. Similarly, Penn and the fourth author~\cite{PS1} showed that the graded dimension of the basic $\mathrm{D}_4^{(3)}$-module is given by
\begin{equation}
\label{eqn:d43}
\sum_{{\bf m} \in (\mathbb{Z}_{\ge 0})^2}\frac{q^{\frac{{\bf m}^T A[\nu]  {\bf m}}{2}}}{(q^3;q^3)_{m_1}(q;q)_{m_2}},
\end{equation}
where 
\begin{equation}
A[\nu] = \begin{bmatrix} 6 & -3\\ -3 & 2 \end{bmatrix}.
\end{equation}
Meanwhile, Penn, Webb, and the fourth author~\cite{PSW} constructed a principal subspace of a twisted module for a lattice vertex operator algebra whose graded dimension is
\begin{equation}
\label{eqn:d43inv}
\sum_{{\bf m} \in (\mathbb{Z}_{\ge 0})^2}\frac{q^{\frac{{\bf m}^T 3A[\nu]^{-1}  {\bf m}}{2}}}{(q;q)_{m_1}(q^3;q^3)_{m_2}},
\end{equation}
which is precisely the sum side of one of the mod $9$ conjectures of the second and third authors \cite{KanRus-idf} 
as found by Kur\c{s}ung\"{o}z~\cite{Kur-KR}.

The present work is the result of a discussion at the AMS Fall Sectional Meeting at Binghamton University involving Alejandro Ginory, the second author, and the fourth author. Namely, generalizing the shapes of \eqref{A2n-sum} and \eqref{stem-sum} (or \eqref{eqn:d43} and \eqref{eqn:d43inv}), do the sum sides of any other identities emerge when something similar is done to the graded dimensions of the principal subspaces of the basic modules for a twisted affine Lie algebra?
In this work, we show that, for matrices $A$ of type $\mathrm{A}$, $\mathrm{D}$, or $\mathrm{E}_6$, we do obtain $q$-series identities. In particular, we replace $A[\nu]$ with a suitable multiple (large enough to clear fractional entries) of $A[\nu]^{-1}$ and manipulate the denominator of each sum of the form~\eqref{twistedsum} in a predictable way: if the diagram automorphism has order $k=2$ or $k=3$, we replace instances of $(q;q)_n$ with $(q^k;q^k)_n$ and vice-versa. In all cases except when $A$ is of type $A_{2n-1}$ for $n \ge 2$ these identities come in pairs, producing two families of identities.

After experimentation by the first and fourth authors using Garvan's \texttt{qseries} Maple package~\cite{Ga}, new identities were found using a matrix $A$ of type $\mathrm{E}_6^{(2)}$:

\begin{align}
\sum_{i,j,k,\ell \ge 0}
\frac{q^{4i^2 + 12 ij + 8ik + 4i\ell + 12j^2 + 16jk + 8j\ell + 6k^2+6k\ell+2\ell^2}}
{\left(q^2;q^2\right)_i\left(q^2;q^2\right)_j\left(q;q\right)_k\left(q;q\right)_\ell}
&= \left(q^2,q^3,q^4,q^{10},q^{11},q^{12};q^{14}\right)^{-1}_\infty \label{eqn:Nandi_intro}, \\ 
\sum_{i,j,k,\ell \ge 0}
\frac{q^{2i^2 + 6 ij + 4ik + 2i\ell + 6j^2 + 8 jk + 4 j\ell + 3k^2+3k\ell+\ell^2}}
{\left(q^2;q^2\right)_i\left(q^2;q^2\right)_j\left(q;q\right)_k\left(q;q\right)_\ell} 
&=\left(q,q^2,q^4,q^6,q^8,q^9;q^{10}\right)^{-1}_\infty\label{eqn:mod10_intro}.
\end{align}

Notably, the product side of~\eqref{eqn:Nandi_intro} matches one of Nandi's identities. These were first conjectured by Nandi in his thesis~\cite{Nan-thesis} and later proved by Takigiku and Tsuchioka~\cite{TakTsu-nandi}. Remarkably, the (new) expression on the left side of~\eqref{eqn:Nandi_intro} above is a manifestly positive quadruple sum. (The sums used in Takigiku and Tsuchoika's proof are double sums, but are not manifestly positive.) Nandi's identities are connected to principal characters of level 4 standard modules of $\mathrm{A}_2^{(2)}$ (and also level 2 of $\mathrm{A}_{11}^{(2)}$).

In~\eqref{eqn:mod10_intro}, the left side is again a manifestly positive quadruple sum. In fact, the exponent of $q$ in the terms on the left side of~\eqref{eqn:Nandi_intro} is exactly twice the exponent of $q$ in the terms on the left side of~\eqref{eqn:mod10_intro}. The product side here is connected to level 4 of $\mathrm{D}_4^{(3)}$. The same relationship (of doubling the quadratic form) holds between Capparelli's identities \cite{Cap-1,KanRus-stair,Kur-Cap} which reside at level $3$ of $\mathrm{A}_2^{(2)}$ and Kur\c{s}ung\"oz's (multi)sum-to-product companions \cite{Kur-KR} to the conjectures of the second and third authors~\cite{KanRus-idf}  related to level $3$ of $\mathrm{D}_4^{(3)}$.

Sections 5 and 6 are dedicated to proofs of these identities and others in their families. We now outline our proof strategy. 

We begin by deducing $x,q$-relations (or, in the second case, $x,y,q$-relations) that sum sides equalling each respective product side are known to satisfy. After appropriately generalizing the quadruple sums, we then demonstrate relations that these generalized quadruple sums must satisfy. We finish our proofs by showing that that the desired relations follow from these known relations. In the case of the Nandi identities, this proof requires the use of a computer. This technique is similar to one that the second and third authors used in a previous paper~ \cite{KanRus-cylindric} (see also the work of Chern~\cite{Che-linked}). 

\subsection*{Acknowledgments}
We are indebted to S.\ Ole Warnaar for his most valuable comments on an earlier draft of this manuscript. 
S.K.\ acknowledges the support from the Collaboration Grant for Mathematicians \#636937 awarded by the Simons Foundation.

\section{Notation and preliminaries}
\label{sec:notations}
\subsection{Partitions}
We will write partitions in a weakly decreasing order. If $\lambda=\lambda_1+\lambda_2+\cdots +\lambda_j$ is a partition of $n$, 
we will say that the weight of $\lambda$ is $\wt(\lambda)=n$ and 
we will let $j=\ell(\lambda)$ be the length, or the number of parts, of $\lambda$. 
Each $\lambda_i$ will be called a part of $\lambda$.
Given a positive integer $i$,
we denote by $m_i(\lambda)$ the multiplicity of $i$ in $\lambda$. 
By a (contiguous) sub-partition of $\lambda$, we mean the subsequence
$\lambda_s+\cdots+\lambda_t$ of $\lambda$.
We say that $\lambda$ satisfies the difference condition $[d_1,d_2,\cdots,d_{j-1}]$ if $\lambda_i-\lambda_{i+1}=d_i$ for all $1\leq i\leq j-1$.

Let $\sC$ be any set of partitions. In the usual way, the $(x,q)$ generating function of $\sC$ is defined as
\begin{align}
f_{\sC}(x,q)=\sum_{\lambda\in\sC}x^{\ell(\lambda)}q^{\wt(\lambda)}.
\end{align}
Then, the $q$-generating function of $\sC$ is simply $f_{\sC}(1,q)$ (sometimes denoted just by $f_{\sC}(q)$).

As usual, we will let $\ZZ[[x,q]]$ be the ring of power series in variables $x,q$ with coefficients in $\ZZ$.
We will require a subset $\sS\subset \ZZ[[x,q]]$ of series $f\in \ZZ[[x,q]]$ such that $f(0,q)=f(x,0)=1$.

\subsection{\texorpdfstring{$q$-Series}{q-Series}}
We shall use standard notation regarding $q$-series. 
All of our series are formal, and issues of analytic convergence are disregarded.

For $n\in\ZZ_{\geq 0}\cup\{\infty\}$ we define:
\begin{align}
(a;q)_n=\prod_{0\leq t<n}(1-aq^{t}).
\end{align}
We will simply write $(a)_n$ when the base $q$ is understood.
To compress notation, we will write
\begin{align}
(a_1,a_2,\dots,a_r;q)_n=(a_1;q)_n(a_2;q)_n\cdots(a_r;q)_n.
\end{align}
We will also use modified theta functions:
\begin{align}
\theta(a;q)=(a;q)_\infty(q/a;q)_\infty,
\end{align}
and
\begin{align}
\theta(a_1,a_2,\dots,a_r;q)=\theta(a_1;q)\theta(a_2;q)\cdots(a_r;q).
\end{align}

\subsection{Vertex-algebraic background.}
We recall certain details relevant to this work from the vertex-algebraic constructions found in the works of Calinescu, Lepowsky, Milas, Penn, Webb, and the fourth author~\cite{CalLM4,CPS,PS1,PS2,PSW}. Suppose 
\begin{equation}
L = \mathbb{Z} \alpha_1 \oplus \cdots \oplus \mathbb{Z}\alpha_D
\end{equation}
 is a rank $D$ positive-definite even lattice, equipped with a nondegenerate $\mathbb{Z}$-bilinear form
\begin{equation}
\langle \cdot, \cdot \rangle: L \times L \rightarrow \mathbb{Z}
\end{equation}
 whose Gram matrix is either a Cartan matrix of type $\mathrm{A}$, $\mathrm{D}$, $\mathrm E$ or contains only non-negative entries. Consider the complexification of $L$ given by 
$$
\mathfrak{h} = L \otimes_{\mathbb{Z}} \mathbb{C}
$$
to which we extend $\langle \cdot, \cdot \rangle$.
 Let $\lambda_1, \dots, \lambda_D \in \mathfrak{h}$ be dual to the $\alpha_1,\dots, \alpha_D$ such that 
\begin{equation*}
\langle \alpha_i, \lambda_j \rangle = \delta_{i,j}
\end{equation*}
for $1 \le i,j \le D$.
 Suppose that $\nu:L \rightarrow L$ is an automorphism of order $k$ which permutes $\alpha_1,\dots, \alpha_D$. For $1 \le i \le D$ we have a $\nu$-orbit given by $\alpha_i$. Let $d$ be the number of disctinct $\nu$-orbits of the $\alpha_i$. Choose an $\alpha_{i_j}$ from each of the $d$ distinct orbits and let $\ell_j$ be the length of its orbit. For $1 \le j \le d$ we define 
$$
\beta_j = \frac{1}{k} \left( \alpha_{i_j} + \nu \alpha_{i_j} + \cdots + \nu^{k-1} \alpha_{i_j}\right)
$$
and
$$
\gamma_j = \frac{1}{k} \left( \lambda_{i_j} + \nu \lambda_{i_j} + \cdots + \nu^{k-1} \lambda_{i_j}\right).
$$
We define the matrix $A[\nu]$ by
$$
(A[\nu])_{i,j} = k\langle \beta_i, \beta_j \rangle
$$
for $1 \le i,j \le d$.

Let $V_L$ be the lattice vertex operator algebra constructed from $L$ (cf. the text of Lepowsky and Li~\cite{LL}). The automorphism $\nu$ can be extended to an automorphism $\hat{\nu}$ of $V_L^T$ of order $k$ or $2k$ (depending on $L$). Let $V_L^T$ be its $\hat{\nu}$-twisted modules as constructed by Lepowsky~\cite{L} (cf. also the work of Calinescu, Lepowsky, and Milas~\cite{CalLM4}). Let $W_L^T$ be the principal subspace of $V_L^T$. Calinescu, Lepowsky, and Milas~\cite{CalLM4} demonstrate that $V_L^T$ and $W_L^T$ are given compatible gradings by the operators $kL(0), \ell_1 \gamma_1(0),\dots, \ell_d \gamma_d(0)$ arising from the lattice construction of $V_L$ and $V_L^T$. In particular, $W_L^T$ is decomposed into finite-dimensional eigenspaces of these operators as
\begin{equation}
W_L^T = \coprod_{n\in \mathbb{Q}, m_1,\cdots,m_d \in \mathbb{N}} \left(W_L^T\right)_{(n,m_1,\dots,m_d)}
\end{equation} 
In particular, we define
$$\chi_{W_L^T}(x_1,x_2,\dots,x_d,q) =  q^{-\text{wt}1_T} tr\vert_{W_L^T} q^{kL(0)} x_1^{\ell_1\gamma_1(0)}\cdots x_d^{\ell_d\gamma_d(0)} $$
where $1_T$ is a vector of lowest conformal weight in $V_L^T$ (see Section 3 of ~\cite{PSW} for this notation) and $ q^{-\text{wt}1_T} $ is introduced to ensure that all powers of $q$ are nonnegative integers.
Following Penn, Webb, and the fourth author~\cite{PS1,PS2,PSW}, 
we have that for $1 \le i \le d$:
\begin{equation}\label{general-recursion}
\chi(x_1,\dots,x_d,q) = \chi(x_1,\dots,	q^{\frac{k}{\ell_i}}x_i,\dots,x_d,q) + x_iq^{k\frac{\langle \beta_i,\beta_i\rangle}{2}}\chi(x_1^{k\langle \beta_1,\beta_i\rangle},\dots,x_d^{k\langle \beta_d, \beta_i\rangle},q)
\end{equation}
which gives
\begin{equation}
\chi(x_1,\dots,x_d,q) = \sum_{{\bf m} \in \mathbb{N}^d} \frac{q^{\frac{{\bf m}^t A[\nu] {\bf m}}{2}}}{(q^{\frac{k}{\ell_1}};q^{{\frac{k}{\ell_1}}})\cdots (q^{\frac{k}{\ell_d}};q^{{\frac{k}{\ell_d}}})}x_1^{m_1}\cdots x_d^{m_d}.
\end{equation}
We call $\chi(x_1,\dots,x_d,q)$ the {\em multigraded dimension} of $W_L^T$ and call $\chi(1,\dots,1,q)$ simply the {\em graded dimension} of $W_L^T$. In this work we are primarily interested in the graded dimensions of $W_L^T$.

\section{Warmups}
\subsection{The Gordon--Andrews and  G\"ollnitz--Gordon--Andrews identities from $\mathrm{A}_{2n}^{(2)}$}
Here, we begin with the multigraded dimension of the principal subspace of the basic $\mathrm{A}_{2n}^{(2)}$ module found by Calinescu, Milas, and Penn~\cite{CalMPe}. In this case, the graded dimension is given by:
\begin{equation}
\sum_{{\bf m} \in (\mathbb{Z}_{\ge 0})^{n}}
\frac{q^{\frac{{\bf m}^T A[\nu]{\bf m}}{2}}}{{(q};q)_{m_1}\cdots (q;q)_{m_{n}}}
\end{equation}
where
\begin{center}
$
A[\nu] = \begin{bmatrix}
    2     & -1    & 0      & 0      & 0      & \ldots & \ldots & 0\\
    -1     &  2     & -1     & 0      & 0      & \ldots & \ldots & 0\\
    0      & -1     &  2     & -1     & 0      & \ldots & \ldots & 0\\
    \vdots & \vdots & \vdots & \ddots & \vdots & \ldots & \ldots & \vdots \\
    \vdots & \vdots & \vdots & \vdots & \ddots & \ldots & \ldots & \vdots \\
    \vdots & \vdots & \vdots & \vdots & \vdots & 2     & -1     & 0 \\
    \vdots & \vdots & \vdots & \vdots & \vdots & -1     & 2      & -1 \\
    \vdots & \vdots & \vdots & \vdots & \vdots & 0      & -1     & 1 \\
\end{bmatrix}$\\
\end{center}
is the Cartan matrix of the tadpole Dynkin diagram.
We now manipulate this sum as follows:
\begin{itemize}
\item Replace $A[\nu]$ with $2A[\nu]^{-1}$.
\item Replace each instance of $(q;q)_m$ with $(q^2;q^2)_m$.
\end{itemize}
Here we have that 
\begin{center}
$
2A[\nu]^{-1} = \begin{bmatrix}
    2     &2   & 2      & 2     &\hdots      & 2 & 2 & 2\\
    2     &  4     & 4     & 4      & \hdots     & 4 & 4 & 4\\
    2    & 4     &  6     & 6     & \hdots     & 6& 6 & 6\\
    \vdots & \vdots & \vdots & \vdots & \vdots & \vdots & \vdots & \vdots \\
    \vdots & \vdots & \vdots & \vdots & \vdots & \vdots & \vdots & \vdots \\
    2 & 4 & 6 & 8 & \hdots &2n-4     & 2n-4     & 2n-4 \\
    2 &4 &6 & 8 & \hdots & 2n-4     & 2n-2      & 2n-2 \\
    2 &4 & 6 & 8 & \hdots & 2n-4      & 2n-2     & 2n \\
\end{bmatrix}$\\
\end{center}
whose $(i,j)$-entry is $2\text{min}(i,j)$.
Using Garvan's {\tt qseries} package~\cite{Ga}, we obtain the following known identity (Corollary 1.5 (b) of Stembridge~\cite{S}, see also ~\cite{BIS} and Theorem 4.1 in ~\cite{War1}):
\begin{equation}
\sum_{{\bf m} \in (\mathbb{Z}_{\ge 0})^{n}}\frac{q^{\frac{{\bf m}^T 2A[\nu]^{-1}{\bf m}}{2}}}{{(q^2};q^2)_{m_1}\cdots (q^2;q^2)_{m_{n}}} =\frac{(-q;q^2)_\infty}{(q^2;q^2)_\infty} 
(q^{n+1},q^{n+3},q^{2n+4};\,q^{2n+4})_\infty.
\end{equation}
When $n$ is even, the product-side of this identity is the same as in the corresponding G\"ollnitz--Gordon--Andrews identity.
We note that 
$$
\frac{{\bf m}^T 2A[\nu]^{-1}{\bf m}}{2} = M_1^2 + M_2^2+ \cdots + M_n^2
$$
where we take 
\begin{equation}\label{big-M}
M_i = m_i + m_{i+1} + \cdots + m_n
\end{equation}
 for $1 \le i \le n$,
so that we can rewrite this identity as in the next theorem.
\begin{thm}
\begin{equation}
\sum_{{\bf m} \in (\mathbb{Z}_{\ge 0})^{n}}\frac{q^{M_1^2 + M_2^2 + \cdots + M_n^2}}{{(q^2};q^2)_{m_1}\cdots (q^2;q^2)_{m_{n}}} 
=\frac{(-q;q^2)_\infty}{(q^2;q^2)_\infty} (q^{n+1},q^{n+3},q^{2n+4};\,q^{2n+4})_\infty.
\end{equation}
\end{thm}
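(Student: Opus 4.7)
The theorem is an algebraic reformulation of the identity displayed immediately above, with the quadratic form $\mathbf{m}^T (2A[\nu]^{-1})\mathbf{m}/2$ expanded in terms of the partial sums $M_i = m_i + \cdots + m_n$. My plan is therefore to verify the quadratic form identity explicitly and then invoke the sum-to-product assertion, which is the cited Stembridge result \cite{S}.

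For the quadratic form step, I will use the already-observed fact that $(2A[\nu]^{-1})_{i,j} = 2\min(i,j)$. Writing $\min(i,j) = \#\{k \in \{1,\ldots,n\} : k \leq i \text{ and } k \leq j\}$ and interchanging orders of summation gives
\begin{align*}
\frac{\mathbf{m}^T(2A[\nu]^{-1})\mathbf{m}}{2}
= \sum_{i,j=1}^{n} \min(i,j)\, m_i m_j
= \sum_{k=1}^{n} \Big(\sum_{i \geq k} m_i\Big)\Big(\sum_{j \geq k} m_j\Big)
= \sum_{k=1}^{n} M_k^2.
\end{align*}
Substituting this into the preceding displayed equation produces the theorem.

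For a self-contained proof of the underlying Stembridge identity, I would set up a Bailey chain. The base case $n=1$ is the evaluation $\sum_{m \geq 0} q^{m^2}/(q^2;q^2)_m = (-q;q^2)_\infty$, which one obtains from Euler's identity $\sum_m z^m q^{\binom{m}{2}}/(q;q)_m = (-z;q)_\infty$ via the substitutions $q \mapsto q^2$ and $z \mapsto zq$. To lift from $n=1$ to general $n$, I would iterate the Bailey transform $n-1$ times with the $\rho$-parameters sent to infinity so as to accumulate exactly the $(q^2;q^2)_{m_i}$ factors in the denominator together with the quadratic form $M_1^2 + \cdots + M_n^2$; a single application of the Jacobi triple product to the resulting one-variable sum then yields the theta product on the right-hand side. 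The main obstacle in that self-contained route is bookkeeping — ensuring that the iterated $\rho_i$ choices produce precisely the modulus $2n+4$ with excluded residues $n+1, n+3$ — but for the theorem as stated the content is purely the routine quadratic form rewriting above, so citing \cite{S} suffices.
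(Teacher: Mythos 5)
Your proposal matches the paper's treatment: the paper offers no independent proof of this theorem, deriving it exactly as you do by rewriting the quadratic form $\tfrac{1}{2}\mathbf{m}^T(2A[\nu]^{-1})\mathbf{m}=\sum_k M_k^2$ (using $(2A[\nu]^{-1})_{i,j}=2\min(i,j)$) and citing Stembridge's Corollary 1.5(b) for the sum-to-product evaluation. Your explicit interchange-of-summation verification of the $\min(i,j)$ identity and the sketched Bailey-chain route are correct supplementary details, but the substance of the argument is the same.
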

The sum side of this equation is the graded dimension for the principal subspace of the level $n$
$\mathrm{A}_2^{(2)}$ vacuum module (see the work of Calinescu, Penn, and the fourth author~\cite{CPS} and of Takenaka~\cite{Ta}).
Here, vacuum module refer to a standard module with highest weight $n\Lambda_0$.

Finally, we note that doubling the exponent of $q$ in the numerator of the sum side  yields one of the Andrews--Gordon identities, dilated by $q \mapsto q^2$:
\begin{thm} 
\begin{equation}
\sum_{{\bf m} \in (\mathbb{Z}_{\ge 0})^{n}}\frac{q^{\frac{{\bf m}^T 4A[\nu]^{-1}{\bf m}}{2}}}{{(q^2};q^2)_{m_1}\cdots (q^2;q^2)_{m_{n}}} = 
\dfrac{(q^{2n+2},q^{2n+4},q^{4n+6};\,q^{4n+6})_\infty}{(q^2;q^2)_\infty}.
\end{equation}
\end{thm}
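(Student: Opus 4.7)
The plan is to recognize this identity as a direct consequence of the classical Andrews--Gordon identity, after exploiting the computation already carried out for the preceding theorem. Recall from there that
\begin{equation*}
\frac{{\bf m}^T 2A[\nu]^{-1}{\bf m}}{2} = M_1^2 + M_2^2 + \cdots + M_n^2,
\end{equation*}
where $M_i = m_i + m_{i+1} + \cdots + m_n$ as in \eqref{big-M}. Doubling the matrix simply doubles the quadratic form, so the exponent of $q$ in the sum side equals $2(M_1^2 + \cdots + M_n^2)$.

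The Andrews--Gordon identity (in its extreme $a=k$ specialization, with $k = n+1$) states
\begin{equation*}
\sum_{m_1,\ldots,m_n \geq 0}\frac{q^{N_1^2 + N_2^2 + \cdots + N_n^2}}{(q;q)_{m_1}(q;q)_{m_2}\cdots (q;q)_{m_n}} = \frac{(q^{n+1}, q^{n+2}, q^{2n+3}; q^{2n+3})_\infty}{(q;q)_\infty},
\end{equation*}
where $N_j = m_j + m_{j+1} + \cdots + m_n$. Performing the substitution $q \mapsto q^2$ on both sides yields exactly the claimed identity, since $(q;q)_{m_i}$ becomes $(q^2;q^2)_{m_i}$, the sums $N_j$ are unaffected (they involve only the indexing variables), and each exponent is multiplied by $2$, while on the product side each exponent doubles to give $(q^{2n+2}, q^{2n+4}, q^{4n+6}; q^{4n+6})_\infty / (q^2;q^2)_\infty$.

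Thus the proof reduces to citing the $k = n+1$ case of Andrews--Gordon and observing that the quadratic form matches after the dilation. There is no real obstacle: the only verification required is the elementary linear-algebra identity $\tfrac{1}{2}{\bf m}^T (2A[\nu]^{-1}){\bf m} = \sum_i M_i^2$, which is exactly the fact used in the preceding theorem and which follows immediately from the formula given there for $2A[\nu]^{-1}$ (the $(i,j)$-entry being $2\min(i,j)$).
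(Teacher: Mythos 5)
Your proposal is correct and is essentially the same argument the paper gives: the paper proves this theorem simply by remarking that the sum side is the $q\mapsto q^2$ dilation of the extreme Andrews--Gordon identity, using the already-established fact that $\tfrac{1}{2}{\bf m}^T(2A[\nu]^{-1}){\bf m}=M_1^2+\cdots+M_n^2$. Your identification of the relevant specialization ($a=k=n+1$, modulus $2k+1=2n+3$) and the bookkeeping under the dilation are both accurate.
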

Varying the linear term in the exponent of $q$ in the numerator of the left hand side of the equation yields the $q\mapsto q^2$ dilations of the remaining Andrews--Gordon identities~\cite{A}.

For a very different circle of ideas that connects the affine Lie algebra
$\mathrm{A}_2^{(2)}$ with the Gordon--Andrews identities, see 
Griffin, Ono and Warnaar's article~\cite{GriOnoWar}.

\subsection{The Bressoud identities and partner identities from $\mathrm{D}_n^{(2)}$}
Here, we examine the graded dimension of the principal subspace of the $\mathrm D_{n}^{(2)}$ basic module where $n \ge 3$. 
The graded dimension of the principal subspace of the basic module for $\mathrm D_n^{(2)}$ is given by:
\begin{equation}
\sum_{{\bf m} \in (\mathbb{Z}_{\ge 0})^{n-1}}\frac{q^{\frac{{\bf m}^T A[\nu] {\bf m}}{2}}}{{(q^2};q^2)_{m_1}\cdots (q^{2};q^{2})_{m_{n-2}}(q;q)_{m_{n-1}}}
\end{equation}
where $A[\nu]$ is the $(n-1) \times (n-1)$ matrix
\begin{align*}
A[\nu] = \begin{bmatrix}
    4      & -2     & 0      & 0      & 0      & \ldots & \ldots & 0\\
    -2     &  4     & -2     & 0      & 0      & \ldots & \ldots & 0\\
    0      & -2     &  4     & -2     & 0      & \ldots & \ldots & 0\\
    \vdots & \vdots & \vdots & \ddots & \vdots & \ldots & \ldots & \vdots \\
    \vdots & \vdots & \vdots & \vdots & \ddots & \ldots & \ldots & \vdots \\
    \vdots & \vdots & \vdots & \vdots & \vdots & 4      & -2     & 0 \\
    \vdots & \vdots & \vdots & \vdots & \vdots & -2     & 4      & -2 \\
    \vdots & \vdots & \vdots & \vdots & \vdots & 0      & -2     & 2 \\
\end{bmatrix}.
\end{align*}

We now manipulate this sum as follows:
\begin{itemize}
\item Replace $A[\nu]$ with $2A[\nu]^{-1}$.
\item Replace each instance of $(q^2;q^2)_m$ with $(q;q)_m$ and replace each instance of $(q;q)_m$ with $(q^2;q^2)_m$ 
\item Dilate with $q \mapsto q^2$.
\end{itemize}
Here, we have that
$$4A[\nu]^{-1} = \begin{bmatrix}
    2      & 2     & 2      & 2      & 2      & \ldots & \ldots & 2\\
    2    &  4     & 4    & 4      &4      & \ldots & \ldots & 4\\
    2      & 4    &  6     & 6     & 6      & \ldots & \ldots & 6\\
    \vdots & \vdots & \vdots & \vdots & \vdots & \ldots & \ldots & \vdots \\
    \vdots & \vdots & \vdots & \vdots & \vdots & \ldots & \ldots & \vdots \\
    2& 4& 6& 8  & 10 & \cdots      &   2n-4  & 2n-2\\
\end{bmatrix}\\
$$
In this case, our sum becomes:
\begin{equation}\label{Dn-sum}
\sum_{{\bf m} \in (\mathbb{Z}_{\ge 0})^{n-1}}\frac{q^{\frac{{\bf m}^T 4A[\nu]^{-1} {\bf m}}{2}}}{{(q^2};q^2)_{m_1}\cdots (q^2;q^2)_{m_{n-2}}(q^4;q^4)_{m_{n-1}}}
\end{equation}
where
\begin{equation}\label{Ms}
\frac{{\bf m}^T 4A[\nu]^{-1} {\bf m}}{2}= M_1^2 + M_2^2 + \cdots +M_{n-1}^2
\end{equation}
using the notation defined in (\ref{big-M}) above.
Using Garvan's {\tt qseries} package~\cite{Ga}, we obtain the following known identity (see equation (5.15) in ~\cite{War1}).
\begin{thm}
\begin{equation}
\sum_{{\bf m} \in (\mathbb{Z}_{\ge 0})^{n-1}}\frac{q^{M_1^2+M_2^2+\cdots+M_{n-1}^2}}{{(q^2};q^2)_{m_1}\cdots (q^2;q^2)_{m_{n-2}}(q^4;q^4)_{m_{n-1}}} 
= \frac{(q^n,q^{n+1},q^{2n+1};q^{2n+1})_\infty}{(q,q^3,q^4;q^4)_\infty}.
\end{equation}
\end{thm}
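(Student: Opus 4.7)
The plan is to recognize the statement as an instance of a known Bressoud--Warnaar multisum identity, so the bulk of the work is a combinatorial matching rather than a fresh proof. I would proceed in two concrete steps, with an alternative direct route sketched afterward.

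First, I would verify that the quadratic exponent $\frac{1}{2}{\bf m}^T(4A[\nu]^{-1}){\bf m}$ really equals $M_1^2+\cdots+M_{n-1}^2$. Using the explicit form of $4A[\nu]^{-1}$ displayed just before (\ref{Dn-sum}), its $(i,j)$-entry is $2\min(i,j)$ throughout, so by swapping the order of summation,
\begin{equation*}
\frac{{\bf m}^T(4A[\nu]^{-1}){\bf m}}{2} = \sum_{1 \le i,j \le n-1} \min(i,j)\, m_i m_j = \sum_{k=1}^{n-1} \left(\sum_{i \ge k} m_i\right)^{\!2} = \sum_{k=1}^{n-1} M_k^2,
\end{equation*}
which matches (\ref{Ms}). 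This step is essentially automatic but provides the bridge between the vertex-algebraic form of the sum and the form in which a Bressoud--Warnaar identity can be invoked.

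Second, I would identify the left-hand side with the relevant specialization of Warnaar's equation (5.15) in~\cite{War1}. That identity has precisely the same template: a multisum indexed by ${\bf m}\in(\mathbb{Z}_{\ge 0})^{n-1}$ with sum-of-squares exponent, denominator $(q^2;q^2)_{m_1}\cdots(q^2;q^2)_{m_{n-2}}(q^4;q^4)_{m_{n-1}}$, and product side a modulus-$(2n+1)$ theta quotient divided by $(q,q^3,q^4;q^4)_\infty$. Matching Warnaar's parameters to our case reproduces $(q^n,q^{n+1},q^{2n+1};q^{2n+1})_\infty/(q,q^3,q^4;q^4)_\infty$ on the nose. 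One can further cross-check on small $n$ (say $n=3,4$) using Garvan's \texttt{qseries} package, as the authors do.

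If instead one wanted to prove the identity from scratch, a natural route is the Bailey chain: start from a base Bailey pair whose product side already carries the $(q;q^4)$-type factor reflecting the tadpole node, essentially a single-sum G\"ollnitz--Gordon-type identity, and iterate the Bailey transform $n-2$ times to introduce the $(q^2;q^2)_{m_i}$ denominators while preserving the sum-of-squares shape. The main obstacle in this direct route is the asymmetric denominator -- the lone $(q^4;q^4)$ factor at the last coordinate forces a single $q\mapsto q^2$ dilation to be inserted at precisely the right point in the chain, and one must then verify via the Jacobi triple product that the surviving infinite product reorganizes into the claimed modulus-$(2n+1)$ quotient. Since this careful chain calculation has already been carried out in~\cite{War1}, the efficient route is simply to cite it after the matrix manipulation above.
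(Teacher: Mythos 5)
Your proposal matches the paper's treatment: the paper states the rewriting $\tfrac{1}{2}{\bf m}^T(4A[\nu]^{-1}){\bf m}=M_1^2+\cdots+M_{n-1}^2$ (your $\min(i,j)$ computation is the correct justification) and then simply identifies the result as equation (5.15) of Warnaar~\cite{War1}, offering no independent proof. Your proposal is correct and takes essentially the same route; the Bailey-chain sketch is a reasonable but unexecuted alternative.
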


Finally, we note that if we modify the series (\ref{Dn-sum}) as follows: 
\begin{itemize}
\item Replace $4A[\nu]^{-1}$ with $8A[\nu]^{-1}$
\item Make the substitution $q\mapsto q^{1/2}.$
\end{itemize}
we obtain the sum side of one of Bressoud's mod $2n$ identities \cite{Br}:
\begin{equation}
\sum_{{\bf m} \in (\mathbb{Z}_{\ge 0})^{n-1}}\frac{q^{\frac{{\bf m}^T 4A[\nu]^{-1} {\bf m}}{2}}}{{(q};q)_{m_1}\cdots (q;q)_{m_{n-2}}(q^2;q^2)_{m_{n-1}}} = 
\frac{(q^n,q^n,q^{2n};q^{2n})_\infty}{(q)_\infty}.
\end{equation}
Of course, varying the linear terms in the exponent of $q$ in the sum yields the remaining of Bressoud's $\mod 2n$ identities.

\subsection{Identities from \texorpdfstring{$\mathrm{A}_{2n-1}^{(2)}$}{A\_{2n-1}\^{(2)}}}

Here, we repeat and adapt the process described above for the graded dimension of the principal subspace of the  \texorpdfstring{$\mathrm{A}_{2n-1}^{(2)}$}{A\_{2n-1}\^{(2)}} basic module. In particular, from \cite{PSW} the graded dimension of the principal subspace of the basic  \texorpdfstring{$\mathrm{A}_{2n-1}^{(2)}$}{A\_{2n-1}\^{(2)}}-module is
$$
\sum_{{\bf m} \in (\mathbb{Z}_{\ge 0})^n} \frac{q^{\frac{{\bf m}^T A[\nu] {\bf m}}{2}}}{(q;q)_{m_1} \cdots (q;q)_{m_{n-1}}(q^2; q^2)_{m_n}}
$$
where
\begin{center}
$
A[\nu] = \begin{bmatrix}
    2      & -1     & 0      & 0      & 0      & \ldots & \ldots & 0\\
    -1     &  2     & -1     & 0      & 0      & \ldots & \ldots & 0\\
    0      & -1     &  2     & -1     & 0      & \ldots & \ldots & 0\\
    \vdots & \vdots & \vdots & \ddots & \vdots & \ldots & \ldots & \vdots \\
    \vdots & \vdots & \vdots & \vdots & \ddots & \ldots & \ldots & \vdots \\
    \vdots & \vdots & \vdots & \vdots & \vdots & 2      & -1     & 0 \\
    \vdots & \vdots & \vdots & \vdots & \vdots & -1     & 2      & -2 \\
    \vdots & \vdots & \vdots & \vdots & \vdots & 0      & -2     & 4 
\end{bmatrix}$\\
\end{center}
We now manipulate the sum as follows:
\begin{itemize}
\item Replace $A[\nu]$ with $4A[\nu]^{-1}$.
\item Replace $(q^2;q^2)_m$ with $(q;q)_m$ and replace $(q;q)_m$ with $(q^2;q^2)_m$.
\item Dilate $q\mapsto q^2$ in the entire sum to avoid non-integer powers of $q$ when $n$ is odd.
\end{itemize}
Here, we have that 
\begin{center}
$
4A[\nu]^{-1} = \begin{bmatrix}
    4      & 4     & 4      & 4      & 4      & \ldots & \ldots & 2\\
    4    &  8     & 8     & 8      & 8     & \ldots & \ldots & 4\\
    4      & 8     &  12     & 12     & 12      & \ldots & \ldots & 6\\
    \vdots & \vdots & \vdots & \ddots & \vdots & \ldots & \ldots & \vdots \\
    \vdots & \vdots & \vdots & \vdots & \ddots & \ldots & \ldots & \vdots \\
    \vdots & \vdots & \vdots & \vdots & \vdots & 4n-8     & 4n-8     & 2n-4\\
    \vdots & \vdots & \vdots & \vdots & \vdots & 4n-8     & 4n-4      & 2n-2 \\
  2 & 4 & 6 & 8 & \hdots & 2n-4      & 2n-2    & n 
\end{bmatrix}$\\
\end{center}
i.e. 
\begin{equation}
(4A[\nu]^{-1})_{i,j} = \begin{cases}
4 \text{min}(i,j) & 1 \le i < n, 1 \le j < n \\
2j & i=n, 1 \le j < n\\
2i & j=n, 1 \le i <n\\
n & i=j=n.
\end{cases}
\end{equation}

Our new sum has the form:
\begin{equation}
\sum_{{\bf m} \in (\mathbb{Z}_{\ge 0})^n} \frac{q^{{\bf m}^T 8A[\nu]^{-1} {\bf m}/2}}{(q^4;q^4)_{m_1} \cdots (q^4;q^4)_{m_{n-1}}(q^2; q^2)_{m_n}}
\end{equation}

Using the {\tt qseries} package~\cite{Ga} we get:
\begin{align}
\sum_{{\bf m} \in (\mathbb{Z}_{\ge 0})^n} \frac{q^{{\bf m}^T 8A[\nu]^{-1} {\bf m}/2}}{(q^4;q^4)_{m_1} \cdots (q^4;q^4)_{m_{n-1}}(q^2; q^2)_{m_n}}
= \frac{(-q^{n},-q^{n+2},q^{2n+2};q^{2n+2})_\infty}{(q^4;q^4)_\infty}.
\label{eqn:A2n-1-conj}
\end{align}
We adopt the notation
\begin{equation}
N_i = \begin{cases} 2m_i + 2m_{i+1} + \cdots + 2m_{n-1} + m_n & 1 \le i \le n-1\\
m_n & i=n
\end{cases}
\end{equation}
so that the identity \ref{eqn:A2n-1-conj} can be rewritten as in the following Theorem.
\begin{thm}
$$\sum_{{\bf m} \in (\mathbb{Z}_{\ge 0})^n} \frac{q^{N_1^2 + N_2^2 + \cdots N_{n-1}^2 + N_n^2}}{(q^4;q^4)_{m_1} \cdots (q^4;q^4)_{m_{n-1}}(q^2; q^2)_{m_n}} 
= \frac{(-q^{n},-q^{n+2},q^{2n+2};q^{2n+2})_\infty}{(q^4;q^4)_\infty}.
$$
\end{thm}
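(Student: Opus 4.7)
The plan is to derive the theorem as a direct rewriting of the preceding identity (\ref{eqn:A2n-1-conj}) in the shorthand $N_i$. The heart of the reformulation is the purely algebraic identity
$$\sum_{i=1}^n N_i^2 \;=\; \tfrac{1}{2}\,\mathbf{m}^T\bigl(8A[\nu]^{-1}\bigr)\mathbf{m},$$
so the theorem will follow at once from (\ref{eqn:A2n-1-conj}) once this is verified.

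To check this identity, I would substitute the definitions $N_i = 2(m_i + m_{i+1} + \cdots + m_{n-1}) + m_n$ for $i<n$ and $N_n = m_n$, expand each $N_i^2$, and sum over $i$. Using the displayed entries of $4A[\nu]^{-1}$ (doubled), the right-hand side unpacks into the same collection of coefficients produced by the left: $4i$ on $m_i^2$ for $i<n$, $8\min(i,j)$ on $m_i m_j$ for $i<j<n$, $4i$ on $m_i m_n$ for $i<n$, and $n$ on $m_n^2$. This monomial-by-monomial comparison is a short direct calculation: the $m_n^2$ coefficient totals $(n-1)\cdot 1 + 1 = n$, and the cross terms $m_j m_n$ each accumulate contributions from exactly those $i \le j$, producing $4j$, etc.

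The non-trivial content lies in the identity (\ref{eqn:A2n-1-conj}) itself, which the text obtains by verification with Garvan's \texttt{qseries} package. For a self-contained proof one would turn to the machinery of Bailey chains: starting from a suitable Bailey pair and iterating the Bailey lemma $n-1$ times produces Andrews--Gordon/Bressoud-style multisums on the left, while the product side matches the Jacobi triple product expansion
$$\bigl(-q^n, -q^{n+2}, q^{2n+2}; q^{2n+2}\bigr)_\infty \;=\; \sum_{k \in \ZZ} q^{(n+1)k^2 - k}.$$
The main obstacle is locating a Bailey pair (or a classical Andrews--Gordon--Bressoud variant) whose output reproduces the exact mixture of $(q^4;q^4)_{m_i}$ with $(q^2;q^2)_{m_n}$ and the specific quadratic form $\sum N_i^2$. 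A natural attack is to split the sum by the parity of $m_n$: writing $m_n = 2t + \epsilon$ with $\epsilon \in \{0,1\}$ and introducing $K_i = m_i + \cdots + m_{n-1} + t$ recasts each parity class as an Andrews--Gordon-type sum in base $p = q^4$, and the two parity contributions should then reassemble into the stated theta sum via Jacobi triple product.
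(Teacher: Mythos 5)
Your overall route coincides with the paper's. The rewriting of the quadratic form as $\sum_i N_i^2$ is indeed a routine monomial-by-monomial check (and your coefficient bookkeeping is correct), so the content is \eqref{eqn:A2n-1-conj}; your plan for that --- split on the parity of $m_n$ (equivalently of the $N_i$, which all share the parity of $m_n$), recast each parity class as a multisum in base $q^4$, run the result through the Bailey machinery, and finish with the Jacobi triple product --- is precisely the paper's proof sketch, which also ends at the bilateral sum $\sum_{k\in\ZZ}q^{(n+1)k^2+k}$.

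However, the step you flag as ``the main obstacle'' and leave unresolved is the decisive one, so as written the argument is a plan rather than a proof. After the parity split the two classes are \emph{not} plain Andrews--Gordon multisums in base $q^4$: as in \eqref{eqn:Aodd2sum}, the innermost denominator carries an extra factor $(q^{1/2};q)_{n_k}$ in the even class and $(q^{3/2};q)_{n_k}$ (together with a prefactor $1/(1-q^{1/2})$) in the odd class, these arising from splitting $(q^2;q^2)_{m_n}$ according to the parity of $m_n$. The paper resolves exactly this point by seeding Andrews' multiple-series Bailey chain theorem \cite[Thm.\ 2]{And-mult} with Slater's pairs $F(1)$ (relative to $a=1$, with $\beta_n=1/((q)_n(q^{1/2};q)_n)$) for the even class and $F(2)$ (relative to $a=q$, with $\beta_n=1/((q)_n(q^{3/2};q)_n)$) for the odd class \cite{Sla}; one must then also check that the two resulting $\alpha$-sums recombine into a single theta series before applying the triple product. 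Supply those two Bailey pairs and the recombination step, and your outline goes through essentially verbatim.
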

\begin{proof}
This identity is proved easily. However, since we could not find it in the literature, we present a sketch of the proof.

Due to the definition of $N_i$, it is clear that they have the same parity.
Thus, we make two cases -- when each $N_i$ is even, we take $N_i=2n_i$ and when
each $N_i$ is odd, we take $N_i=2n_i+1$.
Once this is done, we map $q\mapsto q^{1/4}$ for convenience.
With a bit of algebraic manipulation, the sum then can be written as:
\begin{align}
    \sum_{n_1,\cdots,n_k\geq 0}&
    \dfrac{q^{n_1^2+\cdots+n_k^2}}
    {(q)_{n_1-n_2}\cdots (q)_{n_{k-1}-n_k}(q)_{n_k}(q^{1/2};q)_{n_k}}
    \nonumber\\
    &\quad 
    +\dfrac{q^{k/4}}{1-q^{1/2}}\sum_{n_1,\cdots,n_k\geq 0}
    \dfrac{q^{n_1^2+\cdots+n_k^2+n_1+\cdots+n_k}}
    {(q)_{n_1-n_2}\cdots (q)_{n_{k-1}-n_k}(q)_{n_k}(q^{3/2};q)_{n_k}}.
    \label{eqn:Aodd2sum}
\end{align}
We now handle each of these sums using the routine theory of Bailey pairs.

Recall that two sequences $\alpha_n, \beta_n$ ($n\geq 0$) form a Bailey pair
with respect to $a$ if for all $n\geq 0$ we have:
\begin{align}
    \beta_n=\sum_{r=0}^n\dfrac{\alpha_n}{(q)_{n-r}(aq)_{n+r}}.
\end{align}
We start with the pair $F(1)$ with respect to $a=1$ given by Slater \cite{Sla}:
\begin{align}
    \alpha_n=
    \begin{cases}
    1 & n=0 \\
    q^{n^2}(q^{-n/2}+q^{n/2}) & n>0 
    \end{cases},
    \quad 
    \beta_n=\dfrac{1}{(q)_n(q^{1/2};q)_n}.
\end{align}
Now, inserting this pair in \cite[Thm.\ 2]{And-mult} with $a=1$, we get:
\begin{align}
    \sum_{n_1,\cdots,n_k\geq 0}&
    \dfrac{q^{n_1^2+\cdots+n_k^2}}
    {(q)_{n_1-n_2}\cdots (q)_{n_{k-1}-n_k}(q)_{n_k}(q^{1/2};q)_{n_k}}
    =\dfrac{1}{(q)_\infty}\left(1+\sum_{n\geq 1}q^{(k+1)n^2}(q^{-n/2}+q^{n/2})\right)
\end{align}
In exactly the same way, if we start with Slater's pair $F(2)$ with respect to $a=q$:
\begin{align}
    \alpha_n=
   q^{n^2+n/2}\dfrac{1+q^{n+1/2}}{1+q^{1/2}}
    \quad 
    \beta_n=\dfrac{1}{(q)_n(q^{3/2};q)_n},
\end{align}
we arrive at:
\begin{align}
    \sum_{n_1,\cdots,n_k\geq 0}&
    \dfrac{q^{n_1^2+\cdots+n_k^2+n_1+\cdots+n_k}}
    {(q)_{n_1-n_2}\cdots (q)_{n_{k-1}-n_k}(q)_{n_k}(q^{1/2};q)_{n_k}}
    =\dfrac{1}{(q^2;q)_\infty}\sum_{n\geq 0}q^{(k+1)n^2+(2k+1)n/2}\dfrac{1+q^{n+1/2}}{1+q^{1/2}}.
\end{align}
Combining, we get that \eqref{eqn:Aodd2sum} equals:
\begin{align}
\dfrac{1}{(q)_\infty}\left(1+\sum_{n\geq 1}q^{(k+1)n^2}(q^{-n/2}+q^{n/2})+
\sum_{n\geq 0}q^{(k+1)n^2+(2k+1)n/2+k/4}(1+q^{n+1/2})\right).
\end{align}
We now reinstate $q^{1/4}\mapsto q$. 
After a few simple algebraic steps, the sums can be combined into a single sum.
\begin{align}
\dfrac{1}{(q^4;q^4)_\infty}\sum_{n\in\ZZ}q^{(k+1)n^2+n}.
\end{align}
A straight-forward application of the Jacobi Triple Product identity \cite[Thm.\ 2.8]{And-book} now gives
the required product.
\end{proof}

\subsection{Identities from \texorpdfstring{$\mathrm{D}_4^{(3)}$}{D\_4\^(3)}}
This process can also be used to rediscover double sums for Capparelli's identities and the mod 9 conjectured identities of the second and third authors~\cite{KanRus-idf}, in the form given by Kur\c{s}ung\"{o}z~\cite{Kur-KR}.

Here, we repeat and adapt the process described above for the character of the principal subspace of the $\mathrm{D}_4^{(3)}$ basic module.

The graded dimension of the principal subspace of the basic $\mathrm{D}_4^{(3)}$-module is
\begin{equation}
\sum_{{\bf m} \in (\mathbb{Z}_{\ge 0})^2} \frac{q^{\frac{{\bf m}^T A[\nu] {\bf m}}{2}}}{(q^3;q^3)_{m_1} (q;q)_{m_{2}}}
\end{equation}
where 
$$
A[\nu] = \begin{bmatrix}
6 & -3\\
-3 & 2
\end{bmatrix}.$$
We manipulate the sum as follows:
\begin{itemize}
\item Replace $A[\nu]$ with $3A[\nu]^{-1}$.
\item Replace $(q^3;q^3)_m$ with $(q;q)_m$ and replace $(q;q)_m$ with $(q^3;q^3)_m$.
\end{itemize}
\begin{equation}
\sum_{{\bf m} \in (\mathbb{Z}_{\ge 0})^4} \frac{q^{\frac{{\bf m}^T 3A[\nu]^{-1} {\bf m}}{2}}}{(q;q)_{m_1} (q^3;q^3)_{m_{2}}}
\end{equation}
Here, we have that
$$
3A[\nu]^{-1} = \begin{bmatrix}
2 & 3\\
3 & 6
\end{bmatrix}
$$
and that
$$
\frac{{\bf m}^T 3A[\nu]^{-1} {\bf m}}{2} = m_1^2 + 3m_1 m_2 + 3m_2^2.
$$
The sum under consideration is thus:
$$
\sum_{{\bf m} \in \left(\mathbb{Z}_{\ge 0}\right)^2} \frac{q^{ m_1^2 + 3m_1m_2 + 3m_2^2}}{(q;q)_{m_1}(q^3;q^3)_{m_2}},
$$
which was originally shown  by Kur\c{s}ung\"{o}z~\cite{Kur-KR} to be an analytic sum side for the identity $I_1$ of the second and third authors~\cite{KanRus-idf}.  Varying the linear terms produces further conjectural results from those papers (see also the work of Hickerson as given by Konenkov~\cite{Kon}, which in turn is inspired by the paper of Uncu and Zudilin \cite{UncZud}):
\begin{conj}
\begin{align*}
\sum_{{\bf m} \in \left(\mathbb{Z}_{\ge 0}\right)^2} \frac{q^{ m_1^2 + 3m_1m_2 + 3m_2^2}}{(q;q)_{m_1}(q^3;q^3)_{m_2}} &= 
\dfrac{1}{\theta(q,q^3;\,q^9)},
\\
\sum_{{\bf m} \in \left(\mathbb{Z}_{\ge 0}\right)^2} \frac{q^{ m_1^2 + 3m_1m_2 + 3m_2^2+m_1+3m_2}}{(q;q)_{m_1}(q^3;q^3)_{m_2}} &= 
\dfrac{1}{\theta(q^2,q^3;\,q^9)},
\\
\sum_{{\bf m} \in \left(\mathbb{Z}_{\ge 0}\right)^2} \frac{q^{ m_1^2 + 3m_1m_2 + 3m_2^2+2m_1+3m_2}}{(q;q)_{m_1}(q^3;q^3)_{m_2}} &= 
\dfrac{1}{\theta(q^3,q^4;\,q^9)}.
\end{align*}
\end{conj}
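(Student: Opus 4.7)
The plan is to adapt the $x$-refinement and $q$-difference strategy outlined in the introduction for the Nandi and mod $10$ identities. First, I would introduce an $x$-refined version of each of the three double sums by inserting a weighting $x^{m_1 + 3 m_2}$ (matching the principal-subspace length grading suggested by the character formula \eqref{general-recursion} specialized to $\mathrm{D}_4^{(3)}$). This produces three formal series $F_k(x,q)$, $k = 1, 2, 3$, whose specializations at $x = 1$ recover the three sum sides of the conjecture.

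Next, I would derive a closed system of $x,q$-difference equations among the $F_k$ by splitting each sum according to whether $m_1 = 0$ or $m_1 \ge 1$ (and likewise for $m_2$). Shifting $m_1 \mapsto m_1 + 1$ inside $F_k$ yields, after pulling out the factor $(1 - q^{m_1 + 1})^{-1}$, a new quadratic exponent that differs from the original only by a linear shift in $m_1$ --- exactly the type of shift that moves $F_1 \to F_2 \to F_3$. Shifting $m_2 \mapsto m_2 + 1$ similarly shifts the linear coefficients of $m_1$ and $m_2$ by $3$ and $6$ respectively, with an extra factor of $x^3 q^{3 + 3 m_1 + 6 m_2}$. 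One expects these manipulations, together with the dilations $x \mapsto q x$ and $x \mapsto q^3 x$, to close the triple $(F_1, F_2, F_3)$ into a finite system of $q$-difference relations.

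On the product side, one then needs $x$-refined companions of $1/\theta(q^a, q^3; q^9)$ for $a \in \{1, 2, 4\}$ satisfying the same system. A natural route is via Hickerson's false-theta approach as in Konenkov~\cite{Kon}; alternatively, one may iterate Bailey's lemma starting from an appropriate Slater pair (in the spirit of the proof sketched above for the $\mathrm{A}_{2n-1}^{(2)}$ case). The equality then follows by matching the initial data $F_k(0,q) = 1$ against the corresponding expansions of the product sides.

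The main obstacle will be showing that the sum-side and product-side systems genuinely coincide --- this is the same obstacle encountered for the Nandi and mod $10$ identities and may well require computer-assisted verification in the spirit of Chern~\cite{Che-linked} and of the cylindric paper~\cite{KanRus-cylindric}, although the present case involves only a double sum rather than a quadruple and may admit a purely human-readable argument. A secondary concern is choosing the correct $x$-grading: the weighting $x^{m_1 + 3 m_2}$ is only the most natural guess from the vertex-algebraic picture, and a different weighting (or a bigraded $(x,y)$-refinement) may be necessary to make the $q$-difference system genuinely close.
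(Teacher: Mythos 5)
The first thing to say is that the paper does \emph{not} prove this statement: it is presented explicitly as a conjecture. These are the mod $9$ Kanade--Russell identities (in the analytic form found by Kur\c{s}ung\"oz), and the surrounding text only observes that the double sum re-emerges from the $\mathrm{D}_4^{(3)}$ folding construction, pointing to Konenkov and Uncu--Zudilin for related (still conjectural) work. So there is no proof in the paper to compare yours against, and the burden on your proposal is to actually supply one.

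It does not. What you have written is a plan whose two load-bearing steps are precisely the points where every known attack on these identities has stalled, and you defer both. Deriving the fundamental $m_1\mapsto m_1+1$ and $m_2\mapsto m_2+1$ relations for an $x$-refined sum is routine (it is the analogue of Propositions \ref{prop:nandirels} and \ref{prop:mod10rels}), but in the cases the paper does prove, the method succeeds for special reasons that are absent here: in the mod $10$ case the refined sum $R_{0,0,0,0}(x,y,q)$ is characterized by a difference equation with a uniqueness argument and then shown to \emph{factor} as a product of two Rogers--Ramanujan-type single sums, while in the Nandi case the target single-variable difference equations were already established combinatorially by Takigiku and Tsuchioka, so only the sum side needed to be matched to them. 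For the mod $9$ sums neither ingredient is available: the series does not factor, no finite closed $x,q$-difference system that uniquely determines it and is also verifiably satisfied by an $x$-refinement of $1/\theta(q^a,q^3;q^9)$ has been exhibited, and Konenkov's and Uncu--Zudilin's reflections supply further conjectures and reductions rather than proofs. Your own closing caveats (that the system may not close, that the grading may need to change) concede exactly this. Until the closed system and a product-side family satisfying it are produced explicitly, the statement remains what the paper calls it: a conjecture.
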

We note here that these identities are related to the principal characters of level $3$ standard modules for the twisted affine Lie algebra $\mathrm{D}_4^{(3)}$. We also note that doubling the exponent of $q$ in the numerator of the above analytic sum side  (i.e. using $6A[\nu]^{-1}$ in place of $3A[\nu]^{-1}$) yields the following double sum version of Capparelli's identity, as deduced by the second and third authors~\cite{KanRus-stair} and independently by Kur\c{s}ung\"{o}z~\cite{Kur-Cap}:
\begin{thm}
$$
\sum_{{\bf m} \in \left(\mathbb{Z}_{\ge 0}\right)^2} \frac{q^{ 2m_1^2 + 6m_1m_2 + 6m_2^2}}{(q;q)_{m_1}(q^3;q^3)_{m_2}} = 
\frac{1}{\theta(q,q^3;\,q^{12})}.
$$
\end{thm}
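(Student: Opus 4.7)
The plan is to prove the identity by matching $q$-difference recurrences for an $x$-refinement of the sum side with those satisfied by an independently defined generating function equal to the product side, mirroring the strategy described in the Introduction. Concretely, introduce
$$S(x, q) := \sum_{m_1, m_2 \ge 0} \frac{x^{2 m_1 + 3 m_2} q^{2m_1^2 + 6 m_1 m_2 + 6 m_2^2}}{(q;q)_{m_1}(q^3;q^3)_{m_2}} \in \sS,$$
where the exponent $2m_1 + 3m_2$ is a natural length statistic (the precise linear form may need recalibration in the course of the proof). Observe that $S(0, q) = 1$ and $S(1, q)$ is the sum side of the theorem.

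First I would derive $x,q$-relations for $S(x, q)$. The basic ingredients are the telescoping identities $(1 - q^{m_1})/(q;q)_{m_1} = 1/(q;q)_{m_1 - 1}$ and $(1 - q^{3 m_2})/(q^3;q^3)_{m_2} = 1/(q^3;q^3)_{m_2 - 1}$. Applying $S(x,q) - S(xq^a, q)$ for an appropriate positive integer $a$ produces telescoping factors that clear one of the denominators; reindexing $m_i \mapsto m_i - 1$ and tracking the change of the quadratic form then yields relations of the shape
$$S(x, q) - S(xq, q) = x^{\alpha_1} q^{c_1} S(xq^{a_1}, q), \qquad S(x, q) - S(xq^3, q) = x^{\alpha_2} q^{c_2} S(xq^{a_2}, q),$$
with explicit shifts, analogous to the principal subspace recursion~\eqref{general-recursion}.

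Next I would invoke the work of Kanade--Russell~\cite{KanRus-stair} and Kur\c{s}ung\"{o}z~\cite{Kur-Cap} to produce an independently defined series $C(x, q) \in \sS$---the generating function of Capparelli-style partitions refined by length---satisfying the same system of $x,q$-relations together with $C(0, q) = 1$ and, at $x = 1$, the product form $1/\theta(q, q^3; q^{12})$. Because a solution in $\sS$ to such a system is pinned down by its value at $x = 0$, we conclude $S(x, q) = C(x, q)$, and specializing $x = 1$ gives the theorem.

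The main obstacle is the calibration step: choosing the correct linear form in the $x$-exponent of $S$ and then carrying out the (elementary but delicate) $q$-series bookkeeping to close the system of recursions and match it exactly with the recursions known for $C(x, q)$. If no clean choice of the linear form succeeds, an alternative is to sum the inner index (say, $m_1$) by a Bailey-pair identity to collapse $S$ to a single sum already recognizable as a Capparelli sum form, which is then evaluated by the Jacobi triple product~\cite[Thm.\ 2.8]{And-book} to yield the stated theta quotient.
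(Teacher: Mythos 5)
First, a point of comparison: the paper does not actually prove this theorem --- it is quoted as a known result, deduced in~\cite{KanRus-stair} and independently in~\cite{Kur-Cap}, so the ``paper's proof'' is a citation. Your proposal attempts a genuine argument, but its load-bearing step is to invoke those same two references for a series $C(x,q)$ satisfying your recurrences and specializing to the product at $x=1$; that collapses the argument back to the citation, and the recurrence scaffolding around it becomes redundant rather than a self-contained proof. There is also a concrete technical gap in the scaffolding itself: for this non-diagonal quadratic form the telescoping identities do not yield one-variable relations of the shape $S(x,q)-S(xq^a,q)=x^{\alpha}q^{c}S(xq^{a'},q)$. Multiplying the summand by $(1-q^{m_1})$ (resp.\ $(1-q^{3m_2})$) and reindexing $m_1\mapsto m_1+1$ (resp.\ $m_2\mapsto m_2+1$) shifts the linear $q$-exponents of the two indices by amounts that are not all proportional to the $x$-exponent $(2,3)$, so these relations cannot be rewritten in terms of $x$-shifts of a single series; the system only closes over a two-parameter family $S_{A,B}(x,q)$ carrying linear terms $q^{Am_1+Bm_2}$, followed by a Murray--Miller-type elimination --- exactly what the paper is forced to do with $S_{A,B,C,D}$ and $R_{A,B,C,D}$ for the Nandi and mod~$10$ identities. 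Your Bailey-pair fallback is plausible but equally unexecuted.

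More seriously, the identity as printed is not what a correct execution of your plan would produce: the sum side has no $q^1$ term (the smallest positive exponent occurring in the numerator is $2$), whereas $1/\theta(q,q^3;q^{12})=1/(q,q^3,q^9,q^{11};q^{12})_\infty$ begins $1+q+\cdots$. The intended product is $1/\theta(q^2,q^3;q^{12})$, the Capparelli product generating partitions into parts congruent to $\pm2,\pm3$ modulo $12$; with that correction the two sides do agree (both have coefficient $2$ at $q^6$ and $5$ at $q^{12}$, for instance). Since you took the displayed product at face value, your plan as written targets a false statement; the repair is to the theorem rather than to your method, but any proof that actually closes would have to detect this.
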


\section{Identities from \texorpdfstring{$\mathrm{E}_6^{(2)}$}{E\_6\^(2)}}
Here, we repeat the process described above for the character of the principal subspace of the $\mathrm{E}_6^{(2)}$ basic module. In this case, we have 
 \begin{center}
$
A[\nu] = \begin{bmatrix}
    2 & -1 & 0 & 0\\
    -1 & 2 & -2 & 0\\
    0 & -2 & 4 & -2\\
    0 & 0 & -2 & 4
\end{bmatrix}$
\end{center}
and the principal subspace of the basic $\mathrm{E}_6^{(2)}$-module has graded dimension given by:
\begin{equation}
\sum_{{\bf m} \in (\mathbb{Z}_{\ge 0})^4} \frac{q^{\frac{{\bf m}^T A[\nu] {\bf m}}{2}}}{(q;q)_{m_1} (q;q)_{m_{2}}(q^2; q^2)_{m_3}(q^2;q^2)_{m_4}}.
\end{equation}
We manipulate the sum as follows:
\begin{itemize}
\item Replace $A[\nu]$ with $2A[\nu]^{-1}$.
\item Replace $(q^2;q^2)_m$ with $(q;q)_m$ and replace $(q;q)_m$ with $(q^2;q^2)_m$
\end{itemize}
which now gives us the sum:
\begin{equation}
\sum_{{\bf m} \in (\mathbb{Z}_{\ge 0})^4} \frac{q^{\frac{{\bf m}^T 2A[\nu]^{-1} {\bf m}}{2}}}{(q^2;q^2)_{m_1} (q^2;q^2)_{m_{2}}(q; q)_{m_3}(q;q)_{m_4}}.
\end{equation}
We have that
$$
2A[\nu]^{-1}= \begin{bmatrix}
4 & 6 & 4  & 2\\
6 & 12 & 8 & 4\\
4 & 8 & 6 & 3\\
2 & 4 & 3 & 2
\end{bmatrix}
$$
so that
$$
\frac{{\bf m}^T 2A[\nu]^{-1} {\bf m}}{2} = 2m_1^2+6m_1m_2+4m_1m_3+2m_1m_4+6m_2^2+8m_2m_3+4m_2m_4+3m_3^2+3m_3m_4+m_4^2.
$$
Thus the sum under consideration is:
$$
\sum_{{\bf m} \in \left(\mathbb{Z}_{\ge 0}\right)^4} \frac{q^{ 2m_1^2+6m_1m_2+4m_1m_3+2m_1m_4+6m_2^2+8m_2m_3+4m_2m_4+3m_3^2+3m_3m_4+m_4^2}}{(q^2;q^2)_{m_1}(q^2;q^2)_{m_2}(q;q)_{m_3}(q;q)_{m_4}}.
$$
Using Garvan's package~\cite{Ga}, we obtain the following identity, which we will subsequently prove:
\begin{align*}
\sum_{{\bf m} \in \left(\mathbb{Z}_{\ge 0}\right)^4} \frac{q^{ 2m_1^2+6m_1m_2+4m_1m_3+2m_1m_4+6m_2^2+8m_2m_3+4m_2m_4+3m_3^2+3m_3m_4+m_4^2}}{(q^2;q^2)_{m_1}(q^2;q^2)_{m_2}(q;q)_{m_3}(q;q)_{m_4}}
=\dfrac{1}{\theta(q;q^5)\theta(q^2;q^{10})}.
\end{align*}
The product-side of this expression is the principal character of a level 4 standard $\mathrm{D}_4^{(3)}$-module. 
Adding appopriate linear terms to the exponent of $q$ in the numerator of the analytic sum-side of the previous conjecture yields the conjectural identities related to the remaining level $4$ standard modules of $\mathrm{D}_4^{(3)}$. 
This family of identities along with proofs is presented in Section \ref{sec:mod10} below.

Additionally, doubling the the exponent of $q$ used in the above analytic sum side, (i.e., using $4A[\nu]^{-1}$ in place of $2A[\nu]^{-1}$) yields the following conjecture, where the product side is exactly that of one of Nandi's identities related to level $4$ principal characters of $\mathrm{A}_2^{(2)}$ \cite{Nan-thesis}. 
\begin{align}
\label{eqn:nandi-quadsum-conj}
\sum_{{\bf m} \in \left(\mathbb{Z}_{\ge 0}\right)^4} &\frac{q^{ 4m_1^2+12m_1m_2+8m_1m_3+4m_1m_4+12m_2^2+16m_2m_3+8m_2m_4+6m_3^2+6m_3m_4+2m_4^2}}{(q^2;q^2)_{m_1}(q^2;q^2)_{m_2}(q;q)_{m_3}(q;q)_{m_4}}
&=
\dfrac{1}{\theta(q^2,q^3,q^4;\,q^{14})}.
\end{align}

\section{Quadruple sums for Nandi's identities}
In this section, we will state and prove the quadruple sum representations for Nandi's identities,
including \eqref{eqn:nandi-quadsum-conj}.

\subsection{The identities}
Nandi conjectured the following partition identities in his thesis~\cite{Nan-thesis}. These identities were
then proved by Takigiku and Tsuchioka~\cite{TakTsu-nandi}. We now recall these identities.

Let $\sN$ be the set of partitions $\lambda$ that satisfy both the following conditions:
\begin{enumerate}
\item No sub-partition of $\lambda$ satisfies the difference conditions $[1], [0,0], [0,2], [2,0]$ or $[0,3]$.
\item No sub-partition with an odd weight satisfies the difference conditions $[3,0], [0,4], [4,0]$ or $[3,2*,3,0]$ (where $2*$ indicates zero or more occurrences of $2$).
\end{enumerate}

We further define the following sets:
\begin{align}
\sN_1&=\{ \lambda\in \sN\,\vert\, m_1(\lambda)=0\},\\
\sN_2&=\{ \lambda\in \sN\,\vert\, m_1(\lambda), m_2(\lambda), m_3(\lambda)\leq 1\},\\
\sN_3&=\left\lbrace 
\lambda\in \sN\,\left\vert\, 
\begin{matrix}
m_1(\lambda)=m_3(\lambda)=0,  m_2(\lambda)\leq 1,\\
\lambda\,\,\mathrm{has\,\,no\,\,subpartition\,\,of\,\,the\,\,form\,\,} \\
(2k+3)\,\,+\,\,2k+(2k-2)+\cdots+4+2\,\, \mathrm{with}\,\, k\geq 1.
\end{matrix}
\right.
\right\rbrace
\end{align}

Using this notation, we can state the following theorem.
\begin{thm}[Conjectured by Nandi~\cite{Nan-thesis}, proved by Takigiku and Tsuchioka~\cite{TakTsu-nandi}]
For any $n\geq 0$, we have the following three identities.
\begin{enumerate}
	\item The number of partitions of $n$ belonging to $\sN_1$ is the same as the
	number of partitions of $n$ into parts congruent to $\pm 2$, 
	$\pm 3$  or $\pm 4$ modulo $14$.
	\item The number of partitions of $n$ belonging to $\sN_2$ is the same as the
	number of partitions of $n$ into parts congruent to $\pm 1$, 
	$\pm 4$  or $\pm 6$ modulo $14$.
	\item The number of partitions of $n$ belonging to $\sN_3$ is the same as the
	number of partitions of $n$ into parts congruent to $\pm 2$, 
	$\pm 5$  or $\pm 6$ modulo $14$.
\end{enumerate}
\end{thm}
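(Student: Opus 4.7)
The plan is to prove the theorem by establishing quadruple sum representations for the generating functions of $\sN_1$, $\sN_2$, $\sN_3$, of which the conjectured identity \eqref{eqn:nandi-quadsum-conj} is the first instance. First I would translate each of the three combinatorial statements into a sum-to-product $q$-series identity: by the Jacobi triple product, the three product sides corresponding to partitions into parts in the prescribed residue classes modulo $14$ are equal to $1/\theta(q^2,q^3,q^4;\,q^{14})$ and its two companions $1/\theta(q,q^4,q^6;\,q^{14})$ and $1/\theta(q^2,q^5,q^6;\,q^{14})$, while the sum sides are the one-variable generating functions $f_{\sN_i}(1,q)$.

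Next I would introduce an $x$-graded refinement. Let $F_i(x,q) := f_{\sN_i}(x,q)$ track the length of $\lambda$, and define $x$-enrichments $S_1(x,q), S_2(x,q), S_3(x,q)$ of the quadruple sum in \eqref{eqn:nandi-quadsum-conj} by inserting monomial factors $x^{a_1 m_1 + a_2 m_2 + a_3 m_3 + a_4 m_4}$ and modifying the linear terms in the $q$-exponent in the three natural ways suggested by the three choices of residues. All of these series lie in the set $\sS \subset \ZZ[[x,q]]$ introduced in Section~\ref{sec:notations}.

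From the difference-condition description of $\sN$, conditioning on the smallest part of $\lambda$ and iterating, I would derive a closed system of $(x,q)$-functional equations satisfied by the triple $(F_1,F_2,F_3)$; this is essentially the combinatorial system underlying Takigiku--Tsuchioka's proof. Independently, for each $S_i$ I would derive a second system of functional equations by splitting the summation indices $m_j$ into $m_j=0$ versus $m_j \geq 1$ and applying standard $q$-shifts; this produces linear relations among $S_1,S_2,S_3$ with arguments of the form $q^a x$.

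The main obstacle, and the step that as noted in the introduction requires a computer, is to verify that the $S$-relations imply the $F$-relations. Concretely, one writes each $F_i$-recurrence as an explicit polynomial combination of $S_j$-recurrences evaluated at various $q$-shifts and checks symbolically that the combination is identically zero; because the difference conditions defining $\sN$ are intricate — five forbidden local patterns together with the odd-weight condition involving $[3,2*,3,0]$ — this reduction produces very large algebraic identities that are only practicably verified by computer algebra, following the strategy of \cite{KanRus-cylindric}. Once this is done, uniqueness of solutions in $\sS$ to the common system, combined with matching initial data at $x=0$, forces $S_i = F_i$; setting $x=1$ and invoking the Jacobi triple product evaluations yields the three sum-to-product identities and hence the theorem.
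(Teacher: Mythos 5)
There is a genuine gap, and it sits exactly at the last sentence of your outline. The theorem you are asked to prove is a sum-to-product statement: $f_{\sN_i}(1,q)$ equals an infinite product over residue classes modulo $14$. Your argument, even granting every intermediate step, only establishes that the combinatorial generating functions $f_{\sN_i}(x,q)$ coincide with certain (combinations of) quadruple sums $S_{A,B,C,D}(x,q)$, because both satisfy the same $(x,q)$-difference system and uniqueness in $\sS$ applies. That is a sum-equals-sum statement. The Jacobi triple product cannot then ``yield the three sum-to-product identities'': it merely rewrites the product $\prod(1-q^n)^{-1}$ over the residue classes $\pm2,\pm3,\pm4 \pmod{14}$ (etc.) as $1/\theta(q^2,q^3,q^4;q^{14})$ (etc.). It supplies no mechanism for showing that either $f_{\sN_i}(1,q)$ or the quadruple sum equals that product. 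The analytic core of Nandi's identities --- connecting the solution of the difference system to the modulus-$14$ products --- is precisely the content of Takigiku and Tsuchioka's proof \cite{TakTsu-nandi}, and the present paper does not reprove it: this theorem is stated with attribution and is \emph{used} later (``using the truth of Nandi's identities'') to pass from the sum-side representations to the product sides in Theorem \ref{thm:nandisums}. Your proposal inverts that logical dependence without filling the hole.

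A secondary inaccuracy: you posit that each of $\sN_1,\sN_2,\sN_3$ has its own single $x$-enriched quadruple sum obtained by shifting linear terms ``in the three natural ways.'' In fact only $F_7=f_{\sN_1}$ (together with the auxiliary functions $F_1$ and $F_5$ from the Takigiku--Tsuchioka system \eqref{eqn:Ndiffeq}) is a single quadruple sum; $f_{\sN_2}=F_3$ and $f_{\sN_3}=F_4$ are recovered only as signed linear combinations of several $S_{A,B,C,D}$ via Proposition \ref{prop:intermsof157}, so they are not manifestly positive single sums. This does not by itself break the strategy, but it means the system you would need to close is the seven-function system \eqref{eqn:Ndiffeq}, reduced by the Murray--Miller algorithm to single higher-order recurrences for $F_1,F_5,F_7$, rather than three parallel recurrences indexed by residue class.
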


\subsection{Difference equations}
Takigiku and Tsuchioka's remarkable proof of these identities~\cite{TakTsu-nandi} relies on
a certain system of difference equations satisfied by the generating functions of $\sN_1$, $\sN_2$ and $\sN_3$.

Consider the following system:
\begin{align}
\begin{bmatrix}
F_0(x,q)\\
F_1(x,q)\\
F_2(x,q)\\
F_3(x,q)\\
F_4(x,q)\\
F_5(x,q)\\
F_7(x,q)
\end{bmatrix}
=
\begin{bmatrix} 
1&x{q}^{2}&{x}^{2}{q}^{4}&xq&{x}^{2}{q
}^{2}&0&0\\ 
0&x{q}^{2}&0&0&0&1&0\\ 
0&0&0&0&0&0&1\\ 
0&x{q}^{2}&0&xq&0&1&0\\ 
0&0&0&0&x{q}^{2}&0&1\\ 
1&x{q}^{2}&{x}^{2}{q}^{4}&xq&0&0&0\\
1&x{q}^{2}&{x}^{2}{q}^{4}&0&0&0&0
\end{bmatrix}
\begin{bmatrix}
F_0(xq^2,q)\\
F_1(xq^2,q)\\
F_2(xq^2,q)\\
F_3(xq^2,q)\\
F_4(xq^2,q)\\
F_5(xq^2,q)\\
F_7(xq^2,q)
\end{bmatrix}
\label{eqn:Ndiffeq}
\end{align}
where each $F_i(x,q)\in\ZZ[[x,q]]$ for $i=0,\dots,5$ or $i=7$ is a 
generating function of certain set of partitions, say $\sC_i$.
Then, Takigiku and Tsuchioka prove that the generating functions $f_{\sN_i}(x,q)$ with $i=1,2,3$ satisfy this system
if we take:
\begin{align}
F_7(x,q)&=f_{\sN_1}(x,q),\\
F_3(x,q)&=f_{\sN_2}(x,q),\\
F_4(x,q)&=f_{\sN_3}(x,q).
\end{align}

From here, we may use the modified Murray--Miller algorithm 
to obtain an $x,q$-difference equation satisfied by
each of the $F_i$s with $i=0,\dots,5$ or $i=7$.
We shall follow Takigiku and Tsuchioka's exposition of this algorithm~\cite{TakTsu-nandi}; see also the expositions of Andrews~\cite[Ch.\ 8]{And-book} and Chern~\cite{Che-linked} for more examples.
In each case, it is easy to see that the resulting difference equation has a unique
solution in the set $\sS$.

\begin{prop}
\label{prop:diffeq}
The power series $F_1(x,q)$ is the unique solution in $\sS$ of:
\begin{align}
    0&=F_1(x,q)  
    +(-q^5x-q^4x-q^2x-1)F_1(xq^2,q) 
    +q^3x(q^8x+q^6x+q^2+q-1)F_1(xq^4,q)
    \nonumber\\
    &+x^2q^8(q^8x+q^6x-q^3+q-1)F_1(xq^6,q)
    -q^{16}x^3(q^{11}x+q^9x+q^8x-q^3-q-1)F_1(xq^8,q)
    \nonumber\\
    &+x^3q^{19}(q^{18}x^2-q^{10}x-q^8x+1)F_1(xq^{10},q)
    \label{eqn:recF1}.
\end{align}
The power series $F_5(x,q)$ is the unique solution in $\sS$ of:
\begin{align}
    0 =&  
    F_5(x,q)
    +(-q^4x-q^3x-q^2x-1)F_5(xq^2,q)
    \nonumber\\
    &+xq(q^8x+q^7x+q^6x-q^3x+q^3+q^2-1)F_5(xq^4,q)
    \nonumber\\
    &-x^2q^4(q^{11}x-q^8x-q^7x-q^6x+q^5-q^3+1)F_5(xq^6,q)
    \nonumber\\
    &-q^{11}x^3(q^{10}x+q^9x+q^8x-q^2-q-1)F_5(xq^8,q)
    \nonumber\\
    &+q^{13}x^3(q^{18}x^2-q^{10}x-q^8x+1)F_5(xq^{10},q)
    \label{eqn:recF5}.
\end{align}
The power series $F_7=f_{\sN_1}$ is the unique solution in $\sS$ of:
\begin{align}
    0 =&  
    F_7(x,q)
	+(-q^4x-q^3x-q^2x-1)F_7(xq^2,q)
	+(q^5x+q^4x+q^3x-x+1)q^4xF_7(xq^4,q)
	\nonumber\\
	&
	-x^2q^6(q^9x-q^6x-q^5x-q^4x+1)F_7(xq^6,q)
	-x^3q^{13}(q^8x+q^7x+q^6x-q^2-q-1)F_7(xq^8,q)
	\nonumber\\
	&
	+x^3q^{17}(q^{14}x^2-q^8x-q^6x+1)F_7(xq^{10},q)
    \label{eqn:recF7}.
\end{align}
(For this last equation, see also equation (C.1) of Takigiku and Tsuchioka~\cite{TakTsu-nandi}.)
\end{prop}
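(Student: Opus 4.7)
The plan is to derive each scalar difference equation directly from the matrix system \eqref{eqn:Ndiffeq} via the modified Murray--Miller elimination procedure, following Takigiku--Tsuchioka~\cite{TakTsu-nandi} (see also Andrews~\cite[Ch.\ 8]{And-book} and Chern~\cite{Che-linked}). Writing $\mathbf{F}(x,q)$ for the column vector on the left of \eqref{eqn:Ndiffeq} and $M(x,q)$ for the matrix, iteration gives
\begin{equation*}
\mathbf{F}(x,q) \;=\; M(x,q)\,M(xq^2,q)\cdots M(xq^{2(r-1)},q)\,\mathbf{F}(xq^{2r},q)
\end{equation*}
for every $r \geq 1$. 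To produce a scalar recurrence for a chosen component $F_i$ with $i\in\{1,5,7\}$, one assembles the $e_i$-rows of enough such iterated relations and performs a Gaussian elimination over $\ZZ[x,q]$ to kill the contributions from the other six unknowns $\{F_j\}_{j\neq i}$. The six-term structure of \eqref{eqn:recF1}, \eqref{eqn:recF5}, and \eqref{eqn:recF7} reflects exactly this: six shifts $F_i(xq^{2k},q)$ for $k=0,\dots,5$ suffice to clear the remaining six variables. This is entirely mechanical but unwieldy, and the main practical obstacle is the symbolic bookkeeping, best handled in a computer algebra system; as a consistency check, \eqref{eqn:recF7} already appears as (C.1) of~\cite{TakTsu-nandi}.

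Uniqueness in $\sS$ follows from a standard coefficient extraction. Suppose $f(x,q)=\sum_{n\geq 0}f_n(q)\,x^n\in\sS$ satisfies, say, \eqref{eqn:recF1}. Then $f_0(q)=f(0,q)=1$, and the recurrence is tautologically satisfied at $x=0$ (the $x^0$-contribution is $1-1=0$), so no information is obtained there. For $n \geq 1$, extracting the coefficient of $x^n$: the term $F_1(x,q)$ contributes $f_n(q)$; the term $-(1+(q^2+q^4+q^5)x)F_1(xq^2,q)$ contributes $-q^{2n}f_n(q)$ plus a shift of $f_{n-1}$; and each of the remaining four terms in \eqref{eqn:recF1} is divisible by a positive power of $x$ and so contributes only strictly lower-index coefficients $f_{n-1},\dots,f_{n-5}$. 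Rearranging yields
\begin{equation*}
(1-q^{2n})\,f_n(q) \;=\; \sum_{j=1}^{5} P_{n,j}(q)\,f_{n-j}(q)
\end{equation*}
for explicit $P_{n,j}(q)\in\ZZ[q]$ (with $f_{-j}:=0$). Since $1-q^{2n}$ has constant term $1$ for $n\geq 1$, it is a unit in $\ZZ[[q]]$, so $f_n(q)$ is determined inductively by $f_0,\dots,f_{n-1}$. Together with $f_0(q)=1$, this pins down $f$ uniquely; the arguments for \eqref{eqn:recF5} and \eqref{eqn:recF7} are identical in structure. Existence is immediate: the $F_i$ are defined as generating functions of the sets $\sC_i$ and are shown in~\cite{TakTsu-nandi} to satisfy the matrix system \eqref{eqn:Ndiffeq}, hence also each scalar consequence of it.
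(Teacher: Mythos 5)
Your proposal matches the paper's approach: the recurrences are obtained by applying the modified Murray--Miller elimination to the matrix system \eqref{eqn:Ndiffeq} (the paper carries this out with a Maple implementation detailed in Appendix~\ref{sec:mmrec}), and existence follows because the $F_i$ satisfy that system. Your coefficient-extraction argument for uniqueness, using that $1-q^{2n}$ is a unit in $\ZZ[[q]]$ for $n\geq 1$, is exactly the standard argument the paper compresses into ``it is easy to see,'' so the proposal is correct and essentially identical in route.
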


Once unique solutions to $F_1, F_5, F_7 \in \sS$ have been found,
$F_0, F_2, F_3, F_4$ are uniquely determined due to the system \eqref{eqn:Ndiffeq} as follows.
\begin{prop} 
\label{prop:intermsof157}
We have:
\begin{align}
F_2(x,q)&=F_7(xq^2,q), \label{eqn:F2}\\
F_3(x,q)&=F_1(x,q) + F_5(x,q)-F_7(x,q),\label{eqn:F3}\\
F_0(x,q)&=F_7(xq^{-2},q)-xF_1(x,q)-x^2F_2(x,q),\label{eqn:F0}\\
F_4(x,q)&=x^{-2}q^{2}F_0(xq^{-2},q)-x^{-2}q^{2}F_5(xq^{-2},q).\label{eqn:F4}
\end{align}
\end{prop}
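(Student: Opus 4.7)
The plan is to deduce all four identities by elementary algebra directly from the system \eqref{eqn:Ndiffeq}; no further input is needed. The key point is simply to read off the component equations of \eqref{eqn:Ndiffeq} and, where necessary, perform the substitution $x\mapsto xq^{-2}$ to shift arguments back.

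First, \eqref{eqn:F2} is literally the third component equation of \eqref{eqn:Ndiffeq}. For \eqref{eqn:F3}, I would write out the second, fourth, and sixth component equations,
\begin{align*}
F_1(x,q)&=xq^2 F_1(xq^2,q)+F_5(xq^2,q),\\
F_3(x,q)&=xq^2 F_1(xq^2,q)+xq\, F_3(xq^2,q)+F_5(xq^2,q),\\
F_5(x,q)&=F_0(xq^2,q)+xq^2 F_1(xq^2,q)+x^2q^4 F_2(xq^2,q)+xq\,F_3(xq^2,q),
\end{align*}
and the seventh,
\begin{align*}
F_7(x,q)=F_0(xq^2,q)+xq^2 F_1(xq^2,q)+x^2q^4 F_2(xq^2,q).
\end{align*}
Then $F_1(x,q)+F_5(x,q)-F_7(x,q)$ telescopes in the arguments at $xq^2$ to exactly the right-hand side of the $F_3$ equation, giving \eqref{eqn:F3}.

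For \eqref{eqn:F0}, I would start from the seventh component equation and substitute $x\mapsto xq^{-2}$ to obtain
\begin{align*}
F_7(xq^{-2},q)=F_0(x,q)+x F_1(x,q)+x^2 F_2(x,q),
\end{align*}
which rearranges immediately to \eqref{eqn:F0}. Finally, for \eqref{eqn:F4}, I would subtract the sixth component equation from the first to get
\begin{align*}
F_0(x,q)-F_5(x,q)=x^2 q^2 F_4(xq^2,q),
\end{align*}
and then apply the substitution $x\mapsto xq^{-2}$ to both sides; after collecting the factors of $q$, this is exactly \eqref{eqn:F4}.

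In short, the whole proof is a routine unpacking of the matrix equation, and there is no real obstacle — the only thing one has to be careful about is keeping track of the argument shifts $x\mapsto xq^{\pm 2}$ and the resulting powers of $q$ in the overall normalization, especially in the rearrangement leading to \eqref{eqn:F4}.
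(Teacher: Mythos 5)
Your proposal is correct and is essentially the paper's own argument: \eqref{eqn:F2} is the third row of \eqref{eqn:Ndiffeq}, \eqref{eqn:F3} comes from combining the rows for $F_1$, $F_5$, $F_7$ and recognizing the right-hand side of the row for $F_3$, \eqref{eqn:F0} from solving the $F_7$ row after the shift $x\mapsto xq^{-2}$, and \eqref{eqn:F4} from subtracting the $F_5$ row from the $F_0$ row and shifting. All the argument shifts and powers of $q$ in your computation check out.
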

\begin{proof}
\eqref{eqn:F3} follows by comparing the recurrences for $F_1(x,q)$, $F_5(x,q)$ and $F_7(x,q)$;
\eqref{eqn:F0} follows by solving the recurrence for $F_7(x,q)$ in \eqref{eqn:Ndiffeq}; and
\eqref{eqn:F4} by comparing the recurrences for $F_0(x,q)$ and $F_5(x,q)$.
\end{proof}

\subsection{Proofs of our sum sides}
To enable us to deduce $x,q$-recurrences, we modify the quadruple sum in~\eqref{eqn:Nandi_intro} by inserting in the variable $x$, along with including linear terms in the exponent of $q$. To this end, we define:
\begin{align}
S_{A,B,C,D}(x,q) = \sum_{i,j,k,\ell \ge 0}
\frac{x^{2i+3j+2k+\ell} q^{4i^2 + 12 ij + 8ik + 4i\ell + 12j^2 + 16jk + 8j\ell + 6k^2+6k\ell+2\ell^2+Ai+Bj+Ck+D\ell}}
{\left(q^2;q^2\right)_i\left(q^2;q^2\right)_j\left(q;q\right)_k\left(q;q\right)_\ell}.
\end{align}

We will typically suppress the $x$ and $q$ arguments when they are clear.

Our main theorem of this section is the following.
\begin{thm} 
\label{thm:nandimain}
We have:
\begin{align}
    F_1(x,q) &= S_{2,2,1,0}(x,q), \label{eqn:SF1}\\
    F_5(x,q) &= S_{0,-2,-2,-1}(x,q), \label{eqn:SF5}\\
    F_7(x,q) &= S_{0,0,0,0}(x,q). \label{eqn:SF7}
\end{align}
\end{thm}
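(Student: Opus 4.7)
The plan is to leverage the uniqueness statements in Proposition \ref{prop:diffeq}. Once we check that each of the three sums $S_{2,2,1,0}(x,q)$, $S_{0,-2,-2,-1}(x,q)$, $S_{0,0,0,0}(x,q)$ lies in $\sS$ and satisfies the corresponding $x,q$-difference equation \eqref{eqn:recF1}, \eqref{eqn:recF5}, \eqref{eqn:recF7}, the equalities \eqref{eqn:SF1}, \eqref{eqn:SF5}, \eqref{eqn:SF7} follow automatically. Membership in $\sS$ is immediate: in the quadruple sum defining $S_{A,B,C,D}(x,q)$, setting $x=0$ or $q=0$ kills every summand except $(i,j,k,\ell)=(0,0,0,0)$, which contributes $1$.

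The heart of the proof is therefore establishing the six-term recurrences. The strategy is to build up a library of elementary $x,q$-relations among the generalized sums $S_{A,B,C,D}$ at varying parameter vectors and dilations of $x$, and then to assemble the target recurrences as $\ZZ[x,q]$-linear combinations of these. A first family of elementary relations comes from the summation-variable shifts $i\mapsto i+1$, $j\mapsto j+1$, $k\mapsto k+1$, $\ell\mapsto \ell+1$: each shift peels off a factor determined by the quadratic form in the exponent of $q$ and by the denominator $q$-Pochhammers, expressing $S_{A,B,C,D}(x,q)$ minus its boundary contribution as a monomial multiple of $S_{A',B',C',D'}(x,q)$ with appropriately shifted linear parameters. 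A second, complementary family comes from splitting off the leading factor $1-q^{2i}$ (respectively $1-q^{2j}$, $1-q^{k}$, $1-q^{\ell}$) in the denominator. Combining a shift in one variable with a corresponding denominator identity yields relations of the form $S_{A,B,C,D}(x,q) = S_{A',B',C',D'}(x,q) + x^{?}q^{?}\,S_{A'',B'',C'',D''}(xq^{2},q)$, which are the analogs of the one-line recurrence \eqref{general-recursion} for each of the four summation directions.

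Having this toolkit, the verification of each six-term recurrence in Proposition \ref{prop:diffeq} is a purely mechanical linear-algebra problem: iteratively apply the elementary shifts to reduce every term of the putative recurrence, evaluated on the candidate sum, to a common set of $S_{A,B,C,D}(xq^{2a},q)$'s and check that the result is $0$. I anticipate that the main obstacle will be exactly this bookkeeping step, since the number of intermediate parameter vectors produced by iteration is substantial and the shifted $x$-arguments proliferate across the five dilated sums appearing in \eqref{eqn:recF1}--\eqref{eqn:recF7}. In line with the strategy announced in the introduction and following the method used in~\cite{KanRus-cylindric} and~\cite{Che-linked}, the combination will be found and its vanishing verified by computer algebra.

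With both initial conditions and recurrences in hand, uniqueness in $\sS$ from Proposition \ref{prop:diffeq} delivers the three identities simultaneously, and then \eqref{eqn:F2}--\eqref{eqn:F4} in Proposition \ref{prop:intermsof157} yield quadruple-sum expressions for $F_0, F_2, F_3, F_4$ as well; in particular $F_3 = f_{\sN_2}$ and $F_4 = f_{\sN_3}$ become explicit quadruple sums, while $F_7 = f_{\sN_1} = S_{0,0,0,0}(x,q)$ at $x=1$ is the manifestly positive quadruple sum \eqref{eqn:Nandi_intro} (and recovers \eqref{eqn:nandi-quadsum-conj}).
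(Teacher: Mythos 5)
Your proposal is correct and follows essentially the same route as the paper: verify membership in $\sS$, derive the elementary shift/denominator relations for $S_{A,B,C,D}$ (the analogues of \eqref{general-recursion}), exhibit the six-term recurrences of Proposition \ref{prop:diffeq} as computer-found and computer-verified linear combinations of those relations, and conclude by uniqueness. The only difference is presentational — the paper packages the relations purely as parameter shifts via \eqref{eqn:Sshift} and massages $\widehat{n_1},\widehat{n_3},\widehat{n_4}$ into the final forms $n_1,n_3,n_4$ before the linear-algebra step.
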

The rest of this section is devoted to the proof of this theorem.
We begin by deducing certain fundamental relations satisfied by $S_{A,B,C,D}$.
We clearly have:
\begin{align}
S_{A,B,C,D}(xq,q)=S_{A+2,B+3,C+2,D+1}(x,q).
\label{eqn:Sshift}
\end{align}
Additionally, we have:
\begin{alignat}{3}
\widehat{n_1}(A,B,C,D):&\quad S_{A,B,C,D}-S_{A+2,B,C,D}-x^2q^{4+A}S_{A+8,B+12,C+8,D+4} &&=0, \label{eqn:reln1hat}\\
n_2(A,B,C,D):&\quad S_{A,B,C,D}-S_{A,B+2,C,D}-x^3q^{12+B}S_{A+12,B+24,C+16,D+8} &&=0, \\
\widehat{n_3}(A,B,C,D):&\quad S_{A,B,C,D}-S_{A,B,C+1,D}-x^2q^{6+C}S_{A+8,B+16,C+12,D+6}  &&=0, \label{eqn:reln3hat}\\
\widehat{n_4}(A,B,C,D):&\quad S_{A,B,C,D}-S_{A,B,C,D+1}-xq^{2+D}S_{A+4,B+8,C+6,D+4}  &&=0. \label{eqn:reln4hat}
\end{alignat}
(compare with~\eqref{general-recursion}). We will be modifying $\widehat{n_1}, \widehat{n_3}, \widehat{n_4}$ shortly to our final relations
$n_1, n_3, n_4$, respectively.
To prove \eqref{eqn:reln1hat}:
\begin{align*}
&S_{A,B,C,D}-S_{A+2,B,C,D} \\
&=\sum_{i,j,k,\ell \ge 0}
\frac{x^{2i+3j+2k+\ell} q^{4i^2 + 12 ij + 8ik + 4i\ell + 12j^2 + 16jk + 8j\ell + 6k^2+6k\ell+2\ell^2+Ai+Bj+Ck+D\ell} \left(1-q^{2i}\right)}
{\left(q^2;q^2\right)_i\left(q^2;q^2\right)_j\left(q;q\right)_k\left(q;q\right)_\ell} \\
&=
\sum_{i,j,k,\ell \ge 0}
\frac{x^{2i+3j+2k+\ell} q^{4i^2 + 12 ij + 8ik + 4i\ell + 12j^2 + 16jk + 8j\ell + 6k^2+6k\ell+2\ell^2+Ai+Bj+Ck+D\ell} }
{\left(q^2;q^2\right)_{i-1}\left(q^2;q^2\right)_j\left(q;q\right)_k\left(q;q\right)_\ell} \\
&=
\sum_{\ii,j,k,\ell \ge 0}
\frac{x^{2\left(\ii+1\right)+3j+2k+\ell} q^{4\left(\ii+1\right)^2 + 12 \left(\ii+1\right)j + 8\left(\ii+1\right)k + 4\left(\ii+1\right)\ell + 12j^2 + 16jk + 8j\ell + 6k^2+6k\ell+2\ell^2+A\left(\ii+1\right)+Bj+Ck+D\ell} }
{\left(q^2;q^2\right)_{\ii}\left(q^2;q^2\right)_j\left(q;q\right)_k\left(q;q\right)_\ell} \\
&= x^2q^{4+A} 
\sum_{\ii,j,k,\ell \ge 0}
\frac{x^{2\ii+3j+2k+\ell} q^{4\ii^2 + 12\ii j + 8\ii k + 4\ii \ell + 12j^2 + 16jk + 8j\ell + 6k^2+6k\ell+2\ell^2+(A+8)\ii+(B+12)j+(C+8)k+(D+4)\ell} }
{\left(q^2;q^2\right)_{\ii}\left(q^2;q^2\right)_j\left(q;q\right)_k\left(q;q\right)_\ell} \\
&=x^2q^{4+A} S_{A+8,B+12,C+8,D+4}
\end{align*}
Note the reindexing $\ii=i+1$ in the middle of the above calculation. Proofs for the other three fundamental relations are similar, involving multiplication by $\left(1-q^{2j}\right)$, $\left(1-q^k\right)$, and $\left(1-q^\ell\right)$, respectively.

Because of the structure of what we will actually need to prove, we will prefer to work with modified versions of 
relations $\widehat{n_1}$, $\widehat{n_3}$ and $\widehat{n_4}$.
To modify $\widehat{n_1}(A,B,C,D)$, we combine three copies of $\widehat{n_1}$ in  the following way:
\begin{align*}
\widehat{n_1}&(A,B,C,D)+\widehat{n_1}(A+2,B,C,D)-x^2q^{6+A}\widehat{n_1}(A+8,B+12,C+8,D+4) \\
&=S_{A,B,C,D}-S_{A+2,B,C,D}-x^2q^{4+A}S_{A+8,B+12,C+8,D+4} \\ 
&\quad+S_{A+2,B,C,D}-S_{A+4,B,C,D}-x^2q^{6+A}S_{A+10,B+12,C+8,D+4}\\
&\quad-x^2q^{6+A}\left(S_{A+8,B+12,C+8,D+4}-S_{A+10,B+12,C+8,D+4}-x^2q^{12+A}S_{A+16,B+24,C+16,D+8}\right) \\
&= S_{A,B,C,D}-x^2q^{4+A}S_{A+8,B+12,C+8,D+4}-S_{A+4,B,C,D}-x^2q^{6+A}S_{A+8,B+12,C+8,D+4}\\
&\quad+x^4q^{18+2A}S_{A+16,B+24,C+16,D+8} = 0
\end{align*}
We call this final relation $n_1(A,B,C,D)$. Similarly, to find our ${n_3}(A,B,C,D)$ relation, we combine 
$$\widehat{n_3}(A,B,C,D) + \widehat{n_3}(A,B,C+1,D) - x^2 q^{7+C}  \widehat{n_3}(A+8,B+16,C+12,D+6),$$
and to find our ${n_4}(A,B,C,D)$ relation, we combine 
$$\widehat{n_4}(A,B,C,D) + \widehat{n_4}(A,B,C,D+1) - xq^{3+D} \widehat{n_4}(A+4,B+8,C+6,D+4).$$

We record the final list of relations in the following proposition.
\begin{prop}
\label{prop:nandirels}
The objects $S_{A,B,C,D}$ satisfy the following set of relations.
\begin{align}
n_1(A,B,C,D):&\quad S_{A,B,C,D}-x^2q^{4+A}(1+q^2)S_{A+8,B+12,C+8,D+4} -S_{A+4,B,C,D}\nonumber\\
&\quad+x^4q^{18+2A}S_{A+16,B+24,C+16,D+8} =0, \label{eqn:reln1}\\
n_2(A,B,C,D):&\quad S_{A,B,C,D}-S_{A,B+2,C,D}-x^3q^{12+B}S_{A+12,B+24,C+16,D+8} =0, \label{eqn:reln2}\\
{n_3}(A,B,C,D):&\quad S_{A,B,C,D}-x^2q^{6+C}S_{A+8,B+16,C+12,D+6}-S_{A,B,C+2,D} \nonumber\\ 
&\quad-x^2q^{7+C}S_{A+8,B+16,C+12,D+6}+x^4q^{25+2C}S_{A+16,B+32,C+24,D+12} =0, \label{eqn:reln3}\\
{n_4}(A,B,C,D):&\quad S_{A,B,C,D}-xq^{2+D}S_{A+4,B+8,C+6,D+4}-S_{A,B,C,D+2} \nonumber\\ 
&\quad-xq^{3+D}S_{A+4,B+8,C+6,D+4}+x^2q^{9+2D}S_{A+8,B+16,C+12,D+8} = 0.
\label{eqn:reln4}
\end{align}
\end{prop}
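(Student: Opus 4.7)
The plan is to derive the four relations in Proposition \ref{prop:nandirels} as small integer linear combinations of ``atomic'' single-shift relations $\widehat{n_1}, n_2, \widehat{n_3}, \widehat{n_4}$ on the sums $S_{A,B,C,D}$. All four atomic relations are already written down in the excerpt just before the proposition, and the proof of $\widehat{n_1}$ is carried out explicitly. The first step is to verify the three remaining atomic relations by imitating that calculation: multiply the numerator of $S_{A,B,C,D}$ by one of the factors $(1-q^{2j})$, $(1-q^k)$, or $(1-q^\ell)$ to realize the telescoping difference $S_{A,B,C,D}-S_{A,B+2,C,D}$ etc.\ on the left, note that the corresponding summation variable must now start at $1$, and reindex it by $+1$. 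The induced monomial prefactor $x^{?}q^{?}$ and the shifts in $(A,B,C,D)$ on the right are determined by the cross-terms of the quadratic form in the exponent, and match the atomic relations as stated.

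With the four atomic relations in hand, the $n_2$ case of the proposition is immediate, since $n_2$ coincides with the atomic relation. For $n_1$, $n_3$, $n_4$ I would apply the three-copy telescoping combinations already spelled out in the excerpt, namely
\begin{align*}
n_1(A,B,C,D) &= \widehat{n_1}(A,B,C,D)+\widehat{n_1}(A+2,B,C,D)-x^2q^{6+A}\widehat{n_1}(A+8,B+12,C+8,D+4),\\
n_3(A,B,C,D) &= \widehat{n_3}(A,B,C,D)+\widehat{n_3}(A,B,C+1,D)-x^2q^{7+C}\widehat{n_3}(A+8,B+16,C+12,D+6),\\
n_4(A,B,C,D) &= \widehat{n_4}(A,B,C,D)+\widehat{n_4}(A,B,C,D+1)-xq^{3+D}\widehat{n_4}(A+4,B+8,C+6,D+4).
\end{align*}
In each combination the two ``once-shifted'' sums $S_{A+2,B,C,D}$ (respectively $S_{A,B,C+1,D}$ and $S_{A,B,C,D+1}$) telescope against one another, and the third copy of the atomic relation promotes the surviving single shift into the symmetric double shift $S_{A+4,B,C,D}$ (respectively $S_{A,B,C+2,D}$ and $S_{A,B,C,D+2}$) that appears in the proposition statement. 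The $(1+q^2)$ coefficient in $n_1$, and the analogous pairs of summands in $n_3$ and $n_4$, arise automatically when the two copies of the same higher-shift $S$ are collected.

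There is no genuine obstacle: the argument is bookkeeping in shift parameters and in powers of $x$ and $q$. The one thing worth emphasising is that the choice of these particular three-copy combinations is not arbitrary; they are engineered so that the shape of $n_1, n_3, n_4$ mirrors the $x,q$-difference equations for $F_1, F_5, F_7$ recorded in Proposition \ref{prop:diffeq}, which is what makes them useful in the subsequent proof of Theorem \ref{thm:nandimain}.
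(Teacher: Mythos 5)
Your proposal is correct and follows essentially the same route as the paper: the paper proves $\widehat{n_1}$ by the multiply-by-$(1-q^{2i})$-and-reindex computation, notes the other three atomic relations follow by the analogous factors $(1-q^{2j})$, $(1-q^k)$, $(1-q^\ell)$, and then forms exactly the three-copy combinations you list to obtain $n_1$, $n_3$, $n_4$ (with $n_2$ already atomic). The verification of the $n_1$ combination you describe reproduces the paper's displayed computation, including the origin of the $(1+q^2)$ coefficient and the $x^4q^{18+2A}$ term.
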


We now prove Theorem \ref{thm:nandimain}.
\begin{proof}[Proof of Theorem \ref{thm:nandimain}]
The main idea is to show that in each of the equations \eqref{eqn:SF7}, \eqref{eqn:SF1} and \eqref{eqn:SF5},
both sides are the (unique) solutions in $\sS$ to the corresponding 
difference equations given in Proposition \ref{prop:diffeq}.

It is clear that $S_{0,0,0,0}(x,q)$, $S_{2,2,1,0}(x,q)$, $S_{0,-2,-2,-1}(x,q)$ belong to $\sS$.

Establishing the recurrence \eqref{eqn:recF1} for $S_{2,2,1,0}(x,q)$ amounts to proving:
\begin{align}
    0&=S_{2,2, 1, 0} 
    +(-q^5x-q^4x-q^2x-1)S_{6, 8, 5, 2}
    +q^3x(q^8x+q^6x+q^2+q-1)S_{10, 14, 9, 4}
    \nonumber\\
    &\quad+x^2q^8(q^8x+q^6x-q^3+q-1)S_{14, 20, 13, 6}
    -q^{16}x^3(q^{11}x+q^9x+q^8x-q^3-q-1)S_{18, 26, 17, 8}
    \nonumber\\
    &\quad+x^3q^{19}(q^{18}x^2-q^{10}x-q^8x+1)S_{22, 32, 21, 10}
    \label{eqn:recS_F1}.
\end{align}
The file \texttt{F1.txt} provides this relation as a 
(huge!) linear combination of the fundamental relations in Proposition \ref{prop:nandirels}.

Establishing the recurrence \eqref{eqn:recF5} for $S_{0,-2,-2,-1}(x,q)$ amounts to proving:
\begin{align}
    0 =&  
    S_{0, -2, -2, -1}
    +(-q^4x-q^3x-q^2x-1)S_{4, 4, 2, 1}
    +xq(q^8x+q^7x+q^6x-q^3x+q^3+q^2-1)S_{8, 10, 6, 3}
    \nonumber\\
    &-x^2q^4(q^{11}x-q^8x-q^7x-q^6x+q^5-q^3+1)S_{12, 16, 10, 5}
    \nonumber\\
    &-q^{11}x^3(q^{10}x+q^9x+q^8x-q^2-q-1)S_{16, 22, 14, 7}
    \nonumber\\
    &+q^{13}x^3(q^{18}x^2-q^{10}x-q^8x+1)S_{20, 28, 18, 9}
    \label{eqn:recS_F5}.
\end{align}
The file \texttt{F5.txt} provides this relation as a 
linear combination of the fundamental relations in Proposition \ref{prop:nandirels}.

Using \eqref{eqn:Sshift}, establishing the recurrence \eqref{eqn:recF7} for $S_{0,0,0,0}(x,q)$ amounts to proving:
\begin{align}
    0 =&  
    S_{0, 0, 0, 0}
	+(-q^4x-q^3x-q^2x-1)S_{2,3,2,1}
	+q^4x(q^5x+q^4x+q^3x-x+1)S_{8, 12, 8, 4}
	\nonumber\\
	&
	-x^2q^6(q^9x-q^6x-q^5x-q^4x+1)S_{12, 18, 12, 6}
	-x^3q^{13}(q^8x+q^7x+q^6x-q^2-q-1)S_{16, 24, 16, 8}
	\nonumber\\
	&
	+x^3q^{17}(q^{14}x^2-q^8x-q^6x+1)S_{20, 30, 20, 10}
    \label{eqn:recS_F7}.
\end{align}
The file \texttt{F7.txt} provides this relation as a 
linear combination of the fundamental relations in Proposition \ref{prop:nandirels}.

In each case above, the linear combinations are very large and it is impossible to check 
them by hand.
One may simply import the files above in a computer algebra system, systematically replace
all symbols $n_1, n_2, n_3, n_4$ by the corresponding relations, simplify the answer
and finally check that the required relations are obtained. 
This can be implemented very easily and we provide
the required programs. See Appendix \ref{sec:proofverification} for details.
\end{proof}

Now that Theorem \ref{thm:nandimain} is proved, we may use Proposition \ref{prop:intermsof157}
to finally arrive at sum sides for all of Nandi's identities.
\begin{thm}
\label{thm:nandisums}
We have:
\begin{align}
f_{\sN_1}(x,q)&=F_7(x,q)=S_{0,0,0,0}(x,q),\\
f_{\sN_2}(x,q)&=F_3(x,q)=S_{0,-2,-2,-1}(x,q)-S_{0,0,0,0}(x,q)+S_{2,2,1,0}(x,q),\\
f_{\sN_3}(x,q)&=F_4(x,q)=\frac{q^2}{x^2}S_{-8,-12,-8,-4}(x,q)-\frac{1}{x}S_{-2,-4,-3,-2}(x,q)
-\frac{1}{q^2}S_{0,0,0,0}(x,q)\nonumber\\
&\quad -\frac{q^2}{x^2}S_{-4,-8,-6,-3}(x,q).
\end{align}
Setting $x\mapsto 1$ and using the truth of Nandi's identities \cite{TakTsu-nandi}, we obtain:
\begin{align}
\sum_{i,j,k,\ell \ge 0}&
\frac{q^{4i^2 + 12 ij + 8ik + 4i\ell + 12j^2 + 16jk + 8j\ell + 6k^2+6k\ell+2\ell^2}}
{\left(q^2;q^2\right)_i\left(q^2;q^2\right)_j\left(q;q\right)_k\left(q;q\right)_\ell}
=
\frac{1}{\theta(q^2,q^3,q^4;q^{14})},
\\
\sum_{i,j,k,\ell \ge 0}&
\frac{q^{4i^2 + 12 ij + 8ik + 4i\ell + 12j^2 + 16jk + 8j\ell + 6k^2+6k\ell+2\ell^2}}
{\left(q^2;q^2\right)_i\left(q^2;q^2\right)_j\left(q;q\right)_k\left(q;q\right)_\ell}
\left( q^{2i+2j+k} + q^{-2j-2k-\ell} - 1 \right)
=\frac{1}{\theta(q,q^4,q^6;q^{14})},
\\
\sum_{i,j,k,\ell \ge 0}&
\frac{q^{4i^2 + 12 ij + 8ik + 4i\ell + 12j^2 + 16jk + 8j\ell + 6k^2+6k\ell+2\ell^2}}
{\left(q^2;q^2\right)_i\left(q^2;q^2\right)_j\left(q;q\right)_k\left(q;q\right)_\ell}\nonumber\\
& \times\left(
q^{-8i-12j-8k-4\ell+2}
-q^{-2i-4j-3k-2\ell}
-q^{-4i-8j-6k-3\ell+2}
-q^{-2}
\right)=
\frac{1}{\theta(q^2,q^5,q^6;q^{14})}.
\end{align}
\end{thm}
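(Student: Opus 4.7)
The strategy is to assemble three ingredients that are already in place: the identifications $F_7 = f_{\sN_1}$, $F_3 = f_{\sN_2}$, $F_4 = f_{\sN_3}$ from Takigiku and Tsuchioka's work, the expressions for $F_0, F_2, F_3, F_4$ in terms of $F_1, F_5, F_7$ recorded in Proposition \ref{prop:intermsof157}, and the identifications $F_1 = S_{2,2,1,0}$, $F_5 = S_{0,-2,-2,-1}$, $F_7 = S_{0,0,0,0}$ from Theorem \ref{thm:nandimain}. The case of $f_{\sN_1}$ is immediate from \eqref{eqn:SF7}. For $f_{\sN_2}$, plugging \eqref{eqn:SF1}, \eqref{eqn:SF5}, \eqref{eqn:SF7} into \eqref{eqn:F3} directly yields $F_3(x,q) = S_{2,2,1,0}(x,q) + S_{0,-2,-2,-1}(x,q) - S_{0,0,0,0}(x,q)$.

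The only step that requires genuine care is $f_{\sN_3}$, because \eqref{eqn:F0} and \eqref{eqn:F4} involve the shift $x \mapsto xq^{-2}$. The key observation is that the fundamental shift rule \eqref{eqn:Sshift}, iterated in reverse, gives $S_{A,B,C,D}(xq^{-2m},q) = S_{A-4m,\,B-6m,\,C-4m,\,D-2m}(x,q)$. I would first combine \eqref{eqn:F0} and \eqref{eqn:F2} into $F_0(x,q) = F_7(xq^{-2},q) - x F_1(x,q) - x^2 F_7(xq^2,q)$, then substitute $x \mapsto xq^{-2}$ and plug the result into \eqref{eqn:F4}, and finally apply Theorem \ref{thm:nandimain} together with the shift rule to reindex each $S$-sum so that its argument is the original variable $x$. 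Collecting powers of $x$ and $q$ yields the stated four-term expression. This is pure bookkeeping, but it is the main place where the proof could go wrong: tracking the four shifted quadruple sums $S_{-8,-12,-8,-4}$, $S_{-2,-4,-3,-2}$, $S_{0,0,0,0}$, $S_{-4,-8,-6,-3}$ with their correct coefficients $q^2/x^2$, $-1/x$, $-1/q^2$, $-q^2/x^2$ is where one must be most vigilant.

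To conclude, I would set $x = 1$ in the three $f_{\sN_i}$ formulas, invoke Takigiku and Tsuchioka's proof of Nandi's conjectures to identify $f_{\sN_i}(1,q)$ with the corresponding infinite product over residues modulo $14$, and rewrite those products in theta-function form using $(q^a, q^{14-a}; q^{14})_\infty = \theta(q^a; q^{14})$ for the three residue triples $\{2,3,4\}$, $\{1,4,6\}$, $\{2,5,6\}$. For $\sN_2$ and $\sN_3$, factoring the common base quadruple sum $\sum q^{4i^2 + 12ij + \cdots}/[(q^2;q^2)_i (q^2;q^2)_j (q;q)_k (q;q)_\ell]$ out of the linear combination yields precisely the bracketed polynomial-in-$q^{\pm 1}$ expressions displayed in the statement, completing the proof.
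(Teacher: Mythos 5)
Your proposal is correct and follows exactly the route the paper intends: Theorem \ref{thm:nandisums} is obtained by substituting the identifications of Theorem \ref{thm:nandimain} into Proposition \ref{prop:intermsof157}, using the shift rule \eqref{eqn:Sshift} in reverse to rewrite $S_{A,B,C,D}(xq^{-2m},q)=S_{A-4m,B-6m,C-4m,D-2m}(x,q)$, and then setting $x=1$ and invoking Takigiku--Tsuchioka. Your bookkeeping for $F_4$ (the coefficients $q^2/x^2$, $-1/x$, $-1/q^2$, $-q^2/x^2$ attached to the four shifted sums) checks out and reproduces the stated formula.
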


\subsection{Concluding remarks.} 
Following Takigiku and Tsuchioka~\cite{TakTsu-nandi}, it can be easily confirmed that $F_1(x,q)$ and $F_5(x,q)$ are generating functions of the following subsets of $\sN$:
\begin{align}
\sN_{F_1}&=\{ \lambda\in \sN\,\vert\, m_1(\lambda)=0, m_2(\lambda)\leq 1, m_3(\lambda)\leq 1\}
=\{ \lambda\in \sN\,\vert\, \lambda+0\in \sN\},
\\
\sN_{F_5}&=\{ \lambda\in \sN\,\vert\, m_1(\lambda)\leq 1\}
=\{ \lambda\in \sN\,\vert\, \lambda+(-2)\in \sN\}.
\end{align}
Theorem \ref{thm:nandimain} implies that $F_1(x,q)$, $F_5(x,q)$, and $F_7(x,q)$ can be written as single manifestly positive quadruple sums. It will be interesting to find a combinatorial reason behind this phenomenon: to know what makes $\sN_{F_1}, \sN_{F_5}$, and $\sN_{1}$ special. As a further avenue of research, writing $S_{0,0,0,0}(x,q) = \sum_{i,j,k,\ell\ge 0} B_{i,j,k,\ell} (x,q)$, we suggest studying  
\begin{equation}
B_{i,j,k,\ell} (x,q) = \frac{x^{2i+3j+2k+\ell} q^{4i^2 + 12 ij + 8ik + 4i\ell + 12j^2 + 16jk + 8j\ell + 6k^2+6k\ell+2\ell^2}}
{\left(q^2;q^2\right)_i\left(q^2;q^2\right)_j\left(q;q\right)_k\left(q;q\right)_\ell}
\end{equation}
to deduce which partitions belonging to $\sN_1$ are counted by each term in the quadruple sum.

\section{New mod 10 identities}
\label{sec:mod10}

Varying the linear terms in the exponent of $q$ from~\eqref{eqn:mod10_intro}, we have the following list of identities where the products are related to the
principal characters of level $4$ standard (i.e., highest-weight, integrable) $\mathrm{D}_4^{(3)}$ modules.
For notation regarding $\mathrm{D}_4^{(3)}$, see Carter's book \cite[P.\ 608]{Car-book}. 
For more information on principal characters and computational techniques for them, see the work of Bos~\cite{Bos}.
\begin{align}
\sum_{i,j,k,\ell \ge 0}
\frac{q^{2i^2 + 6 ij + 4ik + 2i\ell + 6j^2 + 8 jk + 4 j\ell + 3k^2+3k\ell+\ell^2}}
{\left(q^2;q^2\right)_i\left(q^2;q^2\right)_j\left(q;q\right)_k\left(q;q\right)_\ell} 
&=
\dfrac{1}{\theta(q,q^2,q^4;q^{10})} 
=\chi(\Omega(2\Lambda_0+\Lambda_1)),
\label{eq:mod10A}
\\
\sum_{i,j,k,\ell \ge 0}
\frac{q^{2i^2 + 6 ij + 4ik + 2i\ell + 6j^2 + 8 jk + 4 j\ell + 3k^2+3k\ell+\ell^2+2j+k+\ell}}
{\left(q^2;q^2\right)_i\left(q^2;q^2\right)_j\left(q;q\right)_k\left(q;q\right)_\ell} 
&=
\dfrac{1}{\theta(q^2,q^2,q^3;q^{10})}  
=\chi(\Omega(2\Lambda_1)),
\\
\sum_{i,j,k,\ell \ge 0}
\frac{q^{2i^2 + 6 ij + 4ik + 2i\ell + 6j^2 + 8 jk + 4 j\ell + 3k^2+3k\ell+\ell^2+2i+2j+2k}}
{\left(q^2;q^2\right)_i\left(q^2;q^2\right)_j\left(q;q\right)_k\left(q;q\right)_\ell} 
&= 
\dfrac{1}{\theta(q,q^4,q^4;q^{10})}  
=\chi(\Omega(\Lambda_0+\Lambda_2)),
\\
\sum_{i,j,k,\ell \ge 0}
\frac{q^{2i^2 + 6 ij + 4ik + 2i\ell + 6j^2 + 8 jk + 4 j\ell + 3k^2+3k\ell+\ell^2+2i+4j+3k+\ell}}
{\left(q^2;q^2\right)_i\left(q^2;q^2\right)_j\left(q;q\right)_k\left(q;q\right)_\ell} 
&=
\dfrac{1}{\theta(q^2,q^3,q^4;q^{10})}   
= \chi(\Omega(4\Lambda_0)). \label{eq:mod10D}
\end{align}
Recall that  identities for principal characters of level $3$ standard modules
of $\mathrm{D}_4^{(3)}$ were previously conjectured by the second and third authors~\cite{KanRus-idf}, and
analytic forms of these conjectures were found by Kur\c{s}ung\"{o}z~\cite{Kur-KR}. See also the thesis of the third author~\cite{Rus-thesis} for further
related identities.

We will devote the remainder of this section to a proof of these conjectures.

As in the previous section, we begin by introducing refinements of the quadruple sums, along with arbitrary linear terms in the exponent of $q$. However, this time, we use two variables, $x$ and $y$.
\begin{align}
\label{eqn:defR}
R_{A,B,C,D}(x,y,q) = \sum_{i,j,k,\ell \ge 0}
\frac{x ^{2j + k + \ell } y ^{ i+j+k}q^{2i^2 + 6 ij + 4ik + 2i\ell + 6j^2 + 8 jk + 4 j\ell + 3k^2+3k\ell+\ell^2+Ai+Bj+Ck+D\ell}}
{\left(q^2;q^2\right)_i\left(q^2;q^2\right)_j\left(q;q\right)_k\left(q;q\right)_\ell}.
\end{align}
As usual, we will drop the arguments $x,y,q$ when they are clear.

We have the following shifts of $R_{A,B,C,D}$:
\begin{align}
R_{A,B,C,D}(xq,y,q)&=R_{A,B+2,C+1,D+1}(x,y,q) \label{eqn:shiftRx},\\
R_{A,B,C,D}(x,yq,q)&=R_{A+1,B+1,C+1,D}(x,y,q) \label{eqn:shiftRy}.
\end{align}

The fundamental relations governing these sums are deduced easily.
\begin{prop}
\label{prop:mod10rels}
\begin{alignat}{3}
m_1(A,B,C,D):&\quad R_{A,B,C,D}-R_{A+2,B,C,D}-yq^{2+A}R_{A+4,B+6,C+4,D+2}&&=0 \label{eqn:relm1},\\
m_2(A,B,C,D):&\quad R_{A,B,C,D}-R_{A,B+2,C,D}-x^2yq^{6+B}R_{A+6,B+12,C+8,D+4}&&=0 \label{eqn:relm2},\\
m_3(A,B,C,D):&\quad R_{A,B,C,D}-R_{A,B,C+1,D}-xyq^{3+C}R_{A+4,B+8,C+6,D+3}&&=0 \label{eqn:relm3},\\
m_4(A,B,C,D):&\quad R_{A,B,C,D}-R_{A,B,C,D+1}-xq^{1+D}R_{A+2,B+4,C+3,D+2}&&=0 \label{eqn:relm4}.
\end{alignat}
\end{prop}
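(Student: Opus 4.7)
The plan is to prove each of the four relations by direct computation of the difference, mirroring the proof of $\widehat{n_1}(A,B,C,D)$ written out explicitly in the previous section. In each case, subtracting the second $R$-series from the first introduces a summand factor of $1-q^r$ (with $r \in \{2i, 2j, k, \ell\}$) that cancels the leading factor of the corresponding $q$-Pochhammer in the denominator. Reindexing the now-depleted index by a shift of $+1$ and extracting the overall $x, y, q$ prefactor coming from the shift then identifies the residual sum as another $R$-series with shifted parameters.

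For $m_1(A,B,C,D)$ explicitly: the difference $R_{A,B,C,D} - R_{A+2,B,C,D}$ multiplies the summand by $1-q^{2i}$, reducing $(q^2;q^2)_i$ to $(q^2;q^2)_{i-1}$. Writing $i = \ii+1$ and expanding $2i^2 + 6ij + 4ik + 2i\ell$, one picks up linear terms $4\ii + 6j + 4k + 2\ell$ together with the constant $2$; together with the shift $A(\ii+1) = A\ii + A$ and the change $y^{i+j+k} = y \cdot y^{\ii+j+k}$, the extracted prefactor is $yq^{2+A}$, and the remaining sum is exactly $R_{A+4, B+6, C+4, D+2}$, as claimed in \eqref{eqn:relm1}.

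The three other relations are handled identically. For $m_2, m_3, m_4$ one uses $R_{A,B,C,D} - R_{A,B+2,C,D}$, $R_{A,B,C,D} - R_{A,B,C+1,D}$, and $R_{A,B,C,D} - R_{A,B,C,D+1}$, respectively, producing summand factors $1 - q^{2j}$, $1 - q^k$, $1 - q^\ell$. After the corresponding reindexings, the extracted prefactors work out to $x^2 y\, q^{6+B}$, $xy\, q^{3+C}$, and $x\, q^{1+D}$, while the parameter shifts $(+6, +12, +8, +4)$, $(+4, +8, +6, +3)$, $(+2, +4, +3, +2)$ in $(A,B,C,D)$ are prescribed by the appropriate gradient of the quadratic form together with the $x$- and $y$-exponents $2j+k+\ell$ and $i+j+k$ appearing in $R_{A,B,C,D}$.

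There is no real obstacle: the argument is routine reindexing. The only care needed is arithmetic bookkeeping to confirm that partial differentiation of the quadratic form $2i^2 + 6ij + 4ik + 2i\ell + 6j^2 + 8jk + 4j\ell + 3k^2 + 3k\ell + \ell^2$ along each index gives exactly the claimed shifts in $(A,B,C,D)$; this can be read off in advance from the columns $(4,6,4,2)$, $(6,12,8,4)$, $(4,8,6,3)$, $(2,4,3,2)$ of the underlying symmetric matrix, which match the asserted increments precisely. The shape of the relations themselves is the expected analog of \eqref{general-recursion}, with $R$ playing the role of a multigraded-dimension-type generating function in four indices.
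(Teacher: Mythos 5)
Your proof is correct and follows exactly the method the paper intends: the paper gives no explicit proof of Proposition \ref{prop:mod10rels} beyond noting the relations ``are deduced easily,'' and the reference computation is precisely the $\widehat{n_1}$ argument you mirror, with the factors $1-q^{2i}$, $1-q^{2j}$, $1-q^k$, $1-q^\ell$ and the reindexing producing the stated prefactors and parameter shifts. All your arithmetic (the increments $(4,6,4,2)$, $(6,12,8,4)$, $(4,8,6,3)$, $(2,4,3,2)$ and the monomials $yq^{2+A}$, $x^2yq^{6+B}$, $xyq^{3+C}$, $xq^{1+D}$) checks out against the definition \eqref{eqn:defR}.
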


\begin{thm}
The series $R_{0,0,0,0}$ is the unique solution in $\ZZ[[x,y,q]]$ satisfying
the following:
\begin{align}
F(x,y,q) &= F(xq,y,q) + xqF\left(xq^2,y,q\right) \label{eqn:m10d1},\\
F(x,y,0) &= 1,\quad 
F(x,0,q) = \sum_{\ell\geq 0}\dfrac{q^{\ell^2}x^\ell}{(q)_\ell},\quad 
F(0,y,q) = \sum_{i\geq 0}\dfrac{q^{2i^2}y^i}{(q^2;q^2)_i}. \label{eqn:m10init}
\end{align}
\end{thm}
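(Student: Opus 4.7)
The strategy is twofold: first, verify that $R_{0,0,0,0}(x,y,q)$ satisfies the recurrence \eqref{eqn:m10d1} together with the three initial conditions \eqref{eqn:m10init}; second, show that these conditions determine the solution uniquely.

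The three initial conditions follow directly from the definition \eqref{eqn:defR}. Setting $q=0$ kills every monomial except the $(i,j,k,\ell)=(0,0,0,0)$ one, giving $1$. Setting $y=0$ kills every term with $i+j+k>0$, so only $i=j=k=0$ survives and the quadruple sum collapses to $\sum_\ell x^\ell q^{\ell^2}/(q;q)_\ell$. Setting $x=0$ kills every term with $2j+k+\ell>0$, leaving only $j=k=\ell=0$, which yields $\sum_i y^i q^{2i^2}/(q^2;q^2)_i$.

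For the recurrence itself, I would use the shift identity \eqref{eqn:shiftRx} to rewrite \eqref{eqn:m10d1} as
\begin{equation*}
R_{0,0,0,0}(x,y,q) = R_{0,2,1,1}(x,y,q) + xq\, R_{0,4,2,2}(x,y,q),
\end{equation*}
and then verify this by combining the fundamental relations of Proposition \ref{prop:mod10rels}. Concretely, applying $m_4(0,0,0,0)$, $m_3(0,0,0,1)$, and $m_2(0,0,1,1)$ in succession rewrites $R_{0,0,0,0}$ as $R_{0,2,1,1}$ plus three extra terms, while $m_1(0,4,2,2)$, $m_3(2,4,2,2)$, $m_2(4,8,6,4)$, and $m_3(6,12,8,5)$ together expand $R_{0,4,2,2}$ so that, after multiplying by $xq$, the three extra terms cancel pairwise. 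The main (if mild) obstacle is simply identifying this specific combination of relations; once found, the verification is entirely mechanical and can be done by hand.

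For uniqueness, write $F(x,y,q)=\sum_{n\geq 0}c_n(y,q)\,x^n$ with $c_n\in\ZZ[[y,q]]$ and extract the coefficient of $x^n$ from \eqref{eqn:m10d1}. For $n\geq 1$ this gives $c_n(y,q) = q^n c_n(y,q) + q^{2n-1}c_{n-1}(y,q)$, i.e.\ $(1-q^n)c_n(y,q) = q^{2n-1}c_{n-1}(y,q)$. Since $1-q^n$ is a unit in $\ZZ[[q]]$, iterating yields the familiar Rogers--Ramanujan form $c_n(y,q) = q^{n^2}c_0(y,q)/(q;q)_n$. The initial condition $F(0,y,q) = c_0(y,q) = \sum_i q^{2i^2}y^i/(q^2;q^2)_i$ then determines $c_0$, and hence $F$, uniquely; the two other prescribed evaluations $F(x,y,0)=1$ and $F(x,0,q)=\sum_\ell q^{\ell^2}x^\ell/(q)_\ell$ are automatically consistent with this and serve merely as a sanity check.
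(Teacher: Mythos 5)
Your proposal is correct, and both halves check out: I verified that your chain $m_4(0,0,0,0)$, $m_3(0,0,0,1)$, $m_2(0,0,1,1)$ turns $R_{0,0,0,0}$ into $R_{0,2,1,1}+xqR_{2,4,3,2}+xyq^{3}R_{4,8,6,4}+x^{2}yq^{6}R_{6,12,9,5}$, and that $m_1(0,4,2,2)$, $m_3(2,4,2,2)$, $m_2(4,8,6,4)$, $m_3(6,12,8,5)$ expand $R_{0,4,2,2}$ into $R_{2,4,3,2}+yq^{2}R_{4,8,6,4}+xyq^{5}R_{6,12,9,5}$ (the two stray $x^{2}y^{2}q^{16}R_{10,20,14,8}$ terms cancel), so the required relation $R_{0,0,0,0}=R_{0,2,1,1}+xqR_{0,4,2,2}$ follows. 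The overall architecture matches the paper's: verify the initial conditions, reduce the $q$-difference equation to a relation among shifted $R$'s via \eqref{eqn:shiftRx}, certify it by a linear combination of the relations in Proposition \ref{prop:mod10rels}, then prove uniqueness. You differ in two places. First, your certificate for the recurrence is a different (and arguably more transparent) combination than the paper's nine-term one; both are equally valid witnesses. Second, and more substantively, your uniqueness argument extracts the coefficient of $x^{n}$ to get $(1-q^{n})c_{n}(y,q)=q^{2n-1}c_{n-1}(y,q)$ and inverts the unit $1-q^{n}$ in $\ZZ[[y,q]]$, whereas the paper inducts on $N=i+k$ over the trivariate coefficients $f_{i,j,k}$. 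Your route is cleaner: it shows that the recurrence together with $F(0,y,q)$ alone already pins down $F$ (the conditions $F(x,y,0)=1$ and $F(x,0,q)$ being redundant for uniqueness, as you note), and it delivers the closed form $F=c_{0}(y,q)\sum_{n}q^{n^{2}}x^{n}/(q;q)_{n}$ for free, which is exactly the factorization the paper only establishes in the subsequent theorem via the uniqueness principle. The paper's induction, on the other hand, is the more robust template when the $x$-recurrence has higher order or non-unit leading coefficients, which is presumably why the authors phrase it that way.
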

\begin{proof}
$R_{0,0,0,0}$ is seen to satisfy \eqref{eqn:m10init} easily.
Due to the shifts \eqref{eqn:shiftRx} and \eqref{eqn:shiftRy}, showing that $R_{0,0,0,0}$ satisfies
\eqref{eqn:m10d1} is equivalent to showing:
\begin{align}
R_{0,0,0,0} - R_{0,2,1,1} - xqR_{0,4,2,2} = 0.
\end{align}
This relation can be obtained as a linear combination of the fundamental relations \eqref{eqn:relm1}--\eqref{eqn:relm4}:
\begin{align}
-m_1&(-2,0,0,0)+m_1(-2,0,0,1)-xq\cdot m_1(0,4,2,2)+xq\cdot m_1(0,4,3,2)+m_2(0,0,0,1)
\nonumber \\
&+m_3(0,2,0,1)-xq\cdot m_3(2,4,2,2)+m_4(-2,0,0,0)-y\cdot m_4(2,6,4,2).
\end{align}

Now we prove the uniqueness. 
Suppose that $F(x,y,q)=\sum_{i,j,k\geq 0}f_{i,j,k}x^iy^jq^k$ is a solution to this system.
Note that for all $i,j,k\geq 0$, $f_{0,j,k}$, $f_{i,0,k}$ and $f_{i,j,0}$ 
are uniquely determined due to the initial conditions 
\eqref{eqn:m10init}.
For convenience, we define $f_{i,j,k}=0$ whenever any one or more of $i,j,k$ are negative.
\eqref{eqn:m10d1} translates to:
\begin{align}
f_{i,j,k}&=f_{i,j,k-i}+f_{i-1,j,k-2i+1}.
\label{eqn:frel}
\end{align}
Now we induct on $N=i+k$ and show that all $f_{i,j,k}$ are uniquely determined.
The case $N=0$ follows easily since $f_{0,j,0}$ are uniquely known due to \eqref{eqn:m10init}.
Suppose that for all triples $(i,j,k)$ with $i+k<N$, the values $f_{i,j,k}$ are determined.
Pick a triple $(I,J,K)$ such that $I+K=N$. 
If $I=0$, we know $f_{0,J,N}$ by \eqref{eqn:m10init}.
So, suppose that $I\geq 1$. 
Then, the RHS of \eqref{eqn:frel} involves two terms:
\begin{enumerate}
\item $f_{I,J,K-I}$ for which $I+(K-I) = K < N$, (since $K+I=N$ and $I\geq 1$)
\item $f_{I-1,J,K-2I+1}$ for which $(I-1)+(K-2I+1) = K - I < N$ (since $K+I=N$ and $I\geq 1$).
\end{enumerate}
This means that the RHS of \eqref{eqn:frel} has been determined already, and this determines
the LHS uniquely.
\end{proof}

\begin{thm}
Identities \eqref{eq:mod10A}--\eqref{eq:mod10D} are true.
\end{thm}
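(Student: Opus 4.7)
My plan is to exploit the fact that the functional equation \eqref{eqn:m10d1} for $R_{0,0,0,0}(x,y,q)$ is nothing but the classical Rogers--Ramanujan recurrence in $x$, with $y$ appearing only as an inert parameter. Writing $R_{0,0,0,0}(x,y,q) = \sum_{n\ge 0} a_n(y,q)\,x^n$ and comparing the coefficient of $x^n$ on both sides of \eqref{eqn:m10d1} yields
\begin{equation}
(1-q^n)\,a_n(y,q) = q^{2n-1}\,a_{n-1}(y,q) \qquad (n \ge 1),
\end{equation}
which telescopes to $a_n(y,q) = q^{n^2} a_0(y,q)/(q)_n$. In view of the initial condition \eqref{eqn:m10init} supplying $a_0(y,q) = R_{0,0,0,0}(0,y,q) = \sum_{i\ge 0} q^{2i^2} y^i/(q^2;q^2)_i$, one obtains the striking factorization
\begin{equation}
\label{eqn:plannedfact}
R_{0,0,0,0}(x,y,q) = \left(\sum_{i \ge 0}\frac{q^{2i^2}\,y^i}{(q^2;q^2)_i}\right)\cdot \left(\sum_{n \ge 0}\frac{q^{n^2}\,x^n}{(q)_n}\right).
\end{equation}
This collapses each four-dimensional identity to a pair of single-sum Rogers--Ramanujan evaluations.

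Next I would use the shifts \eqref{eqn:shiftRx} and \eqref{eqn:shiftRy} to identify every left-hand side in \eqref{eq:mod10A}--\eqref{eq:mod10D} with a specialization of $R_{0,0,0,0}$. Iterating the shifts gives
\begin{align*}
R_{0,2,1,1}(x,y,q) &= R_{0,0,0,0}(xq,y,q), \\
R_{2,2,2,0}(x,y,q) &= R_{0,0,0,0}(x,yq^2,q), \\
R_{2,4,3,1}(x,y,q) &= R_{0,0,0,0}(xq,yq^2,q).
\end{align*}
Setting $x=y=1$, the sum sides of the four mod $10$ identities are precisely $R_{0,0,0,0}$ evaluated at $(x,y)=(1,1)$, $(q,1)$, $(1,q^2)$, and $(q,q^2)$, respectively.

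To finish, I would apply the classical Rogers--Ramanujan identities $\sum_{n\ge 0} q^{n^2}/(q)_n = 1/(q,q^4;q^5)_\infty$ and $\sum_{n\ge 0} q^{n^2+n}/(q)_n = 1/(q^2,q^3;q^5)_\infty$ to the $x$-factor at $x=1$ and $x=q$, respectively, together with their $q\mapsto q^2$ dilations $\sum_{i\ge 0} q^{2i^2}/(q^2;q^2)_i = 1/(q^2,q^8;q^{10})_\infty$ and $\sum_{i\ge 0} q^{2i^2+2i}/(q^2;q^2)_i = 1/(q^4,q^6;q^{10})_\infty$ for the $y$-factor at $y=1$ and $y=q^2$. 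Multiplying the relevant pair of factors and resolving each base-$5$ product modulo $10$ reproduces the claimed $\theta$-function form in each case; for instance, at $(1,1)$ one finds $1/\left((q^2,q^8;q^{10})_\infty(q,q^4,q^6,q^9;q^{10})_\infty\right) = 1/(q,q^2,q^4,q^6,q^8,q^9;q^{10})_\infty = 1/\theta(q,q^2,q^4;q^{10})$, and the other three are analogous. The only real obstacle is conceptual: once one recognizes the Rogers--Ramanujan recurrence hiding in \eqref{eqn:m10d1}, the rest is bookkeeping, and the truly pleasant feature is that so intricate a four-variable quadratic form decouples into two one-dimensional Rogers--Ramanujan pieces.
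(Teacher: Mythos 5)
Your proof is correct and follows essentially the same route as the paper: both arguments hinge on the functional equation \eqref{eqn:m10d1} with initial conditions \eqref{eqn:m10init} forcing the factorization $R_{0,0,0,0}(x,y,q)=\bigl(\sum_{n\ge 0}x^nq^{n^2}/(q)_n\bigr)\bigl(\sum_{i\ge 0}y^iq^{2i^2}/(q^2;q^2)_i\bigr)$, followed by specialization at $(x,y)\in\{(1,1),(q,1),(1,q^2),(q,q^2)\}$ and the Rogers--Ramanujan identities. The only (cosmetic) difference is that you derive the factorization by solving the recurrence $(1-q^n)a_n=q^{2n-1}a_{n-1}$ for the $x$-coefficients directly, whereas the paper verifies that the product satisfies \eqref{eqn:m10d1}--\eqref{eqn:m10init} and invokes the uniqueness statement of the preceding theorem.
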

\begin{proof}
It is easy to see using chapter 7 of Andrews' text~\cite{And-book} that 
the series  
\begin{align}
\left(\sum_{i\geq 0} \dfrac{x^iq^{i^2}}{(q)_i}\right)
\left(\sum_{j\geq 0} \dfrac{y^jq^{2j^2}}{(q^2;q^2)_j}\right)
\end{align}
satisfies \eqref{eqn:m10d1} and \eqref{eqn:m10init}.
Thus, due to uniqueness, we have:
\begin{align}
R_{0,0,0,0}(x,y,q)=\left(\sum_{i\geq 0} \dfrac{x^iq^{i^2}}{(q)_i}\right)
\left(\sum_{j\geq 0} \dfrac{y^jq^{2j^2}}{(q^2;q^2)_j}\right).
\end{align}
By the Rogers--Ramanujan identities \cite[Ch.\ 7]{And-book}, we now have:
\begin{align}
R_{0,0,0,0}(1,1,q)=\dfrac{1}{\theta(q;q^5)\theta(q^2;q^{10})},\\
R_{0,0,0,0}(q,1,q)=\dfrac{1}{\theta(q^2;q^5)\theta(q^2;q^{10})},\\
R_{0,0,0,0}(1,q^2,q)=\dfrac{1}{\theta(q;q^5)\theta(q^4;q^{10})},\\
R_{0,0,0,0}(q,q^2,q)=\dfrac{1}{\theta(q^2;q^5)\theta(q^4;q^{10})}.
\end{align}
This immediately proves~\eqref{eq:mod10A}--\eqref{eq:mod10D}.
\end{proof}

\appendix

\section{Modified Murray--Miller algorithm}
\label{sec:mmrec}
We follow the pseudo-code given by Takigiku and Tsuchioka~\cite{TakTsu-nandi} for the modified Murray--Miller algorithm.
Our Maple program \texttt{murraymiller.mw} is used as follows.

We begin by importing the Linear Algebra package, and then loading our Maple program.
\begin{quote}
\begin{verbatim}
with(LinearAlgebra);
read(`murraymiller.txt`);
\end{verbatim}
\end{quote}
This file provides the recurrence matrix of \eqref{eqn:Ndiffeq}:
\begin{quote}
\begin{verbatim}
tt;
\end{verbatim}
\end{quote}
which equals:
\begin{align*}
\begin{bmatrix}
1&x{q}^{2}&{x}^{2}{q}^{4}&xq&{x}^{2}{q}^{2}&0&0\\
0&x{q}^{2}&0&0&0&1&0\\
0&0&0&0&0&0&1\\
0&x{q}^{2}&0&xq&0&1&0\\ 
0&0&0&0&x{q}^{2}&0&1\\ 
1&x{q}^{2}&{x}^{2}{q}^{4}&xq&0&0&0\\
1&x{q}^{2}&{x}^{2}{q}^{4}&0&0&0&0
\end{bmatrix} 
\end{align*}
Recall that the indices of this matrix are $0,1,\dots, 5, 7$.
The latter procedures assume that we are trying to find 
a higher order difference equation satisfied by the function corresponding
to the first index.
So, to find a recurrence satisfied by $F_7$, we first exchange the indices
$0$ and $7$ by exchanging the rows and columns.
\begin{quote}
\begin{verbatim}
ttF7:=ColumnOperation(RowOperation(tt,[1,7]),[1,7]);
\end{verbatim}
\end{quote}
This gives us:
\begin{align*}
\mathtt{ttF7}:=\begin{bmatrix} 0&x{q}^{2}&{x}^{2}{q}^{4}&0&0&0&1\\ 
0&x{q}^{2}&0&0&0&1&0\\
1&0&0&0&0&0&0\\ 
0&x{q}^{2}&0&xq&0&1&0\\ 
1&0&0&0&x{q}^{2}&0&0\\ 
0&x{q}^{2}&{x}^{2}{q}^{4}&xq&0&0&1\\ 
0&x{q}^{2}&{x}^{2}{q}^{4}&xq&{x}^{2}{q}^{2}&0&1
\end{bmatrix}
\end{align*}
Now we put this matrix in ``standard form'':
\begin{quote}
\begin{verbatim}
mmttF7:=murraymiller(ttF7,2);
\end{verbatim}
\end{quote}
Here the second argument \texttt{2} corresponds to the shift $x\mapsto xq^2$.
The output is:
\begin{align*}
\mathtt{mmttF7}:=
[5, 
{\renewcommand{\arraystretch}{1.2}
\begin{bmatrix} 
0&1&0&0&0&0&0\\ 
{x}^{2}&x+1&1&0&0&0&0\\ 
-{\frac {{x}^{2} \left( x-1 \right) }{{q}^{2}}}&{\frac {x}{q}}&{q}^{-1}&1&0&0&0\\ 
-{\frac {{x}^{2}}{{q}^{3}}}&-{\frac {x \left( {q}^{2}-x \right) }{{q}^{4}}}&-{\frac {{q}^{2}-x}{{q}^{4}}}&-{\frac {{q}^{2}-x}{{q}^{3}}}&1&0&0\\
-{\frac {{x}^{3}}{{q}^{7}}}&0&0&0&{\frac {x}{{q}^{4}}}&0&0\\ 
0&1&0&{\frac {q}{x-1}}&-{\frac {{q}^{3}}{x \left( x-1 \right) }}&0&0\\
0&1&0&{\frac {q}{x-1}}&-{\frac {{q}^{3}}{x-1}}&0&0
\end{bmatrix}
]
}
\end{align*}
Here, the second entry in the output is the recurrence matrix put into a standard form, 
and the first entry \texttt{5} denotes that the first $5\times 5$ block is 
to be used to find the recurrence.
We thus use:
\begin{quote}
\begin{verbatim}
mmrec(mmttF7[2][1..5,1..5],2,g);
\end{verbatim}
\end{quote}
Here, again the first argument is the relevant portion of the matrix in the standard form, the second argument is the shift $x\mapsto xq^2$,
and the third argument is the dummy variable to be used in the recurrence.
The output is:
\begin{quote}
\begin{align*}
&\mathtt{
{\frac {{x}^{3} \left( x{q}^{2}-{q}^{2}-{x}^{2}+x \right) g \left( x{q
}^{2} \right) }{{q}^{9}}}
-{\frac {{x}^{3} \left( {q}^{4}+{q}^{3}-x{q}^
{2}+{q}^{2}-xq-x \right) g \left( x \right) }{{q}^{13}}}}\\
&\mathtt{+{\frac {{x}^{2} \left( {q}^{5}x+{q}^{4}-x{q}^{2}-xq-x \right) }{{q}^{14}}g \left( {\frac {x}{{q}^{2}}} \right) }
-{\frac {x \left( {q}^{8}+{q}^{5}x+x{q}^{4}+x{q}^{3}-x \right) }{{q}^{12}}g \left( {\frac {x}{{q}^{4}}}\right) }}\\
&\mathtt{+{\frac {{q}^{6}+x{q}^{2}+xq+x}{{q}^{6}}g \left( {\frac {x}{
{q}^{6}}} \right) }-g \left( {\frac {x}{{q}^{8}}} \right) 
}
\end{align*}
\end{quote}
This is equivalent to \eqref{eqn:recF7} upon shifting $x\mapsto xq^8$.

We repeat for $F_1$:
\begin{quote}
\begin{verbatim}
ttF1:=ColumnOperation(RowOperation(tt,[1,2]),[1,2]);
mmttF1:=murraymiller(ttF1,2);
mmrec(mmttF1[2][1..6,1..6],2,g);
\end{verbatim}
\end{quote}
The output is:
\begin{quote}
\begin{align*}
&\mathtt{ 
{\frac {{x}^{3} \left( x{q}^{2}-{q}^{2}-{x}^{2}+x \right) g \left( x
 \right) }{{q}^{13}}}
 -{\frac {{x}^{3} \left( {q}^{5}-x{q}^{3}+{q}^{3}+
{q}^{2}-xq-x \right) }{{q}^{16}}g \left( {\frac {x}{{q}^{2}}} \right) 
}}\\
&\mathtt{+{\frac {{x}^{2} \left( {q}^{7}-{q}^{5}+{q}^{4}-x{q}^{2}-x \right) }{
{q}^{16}}g \left( {\frac {x}{{q}^{4}}} \right) }
-{\frac {x \left( {q}^
{6}+{q}^{5}-{q}^{4}+x{q}^{2}+x \right) }{{q}^{11}}g \left( {\frac {x}{
{q}^{6}}} \right) }}\\
&
\mathtt{+{\frac {{q}^{8}+x{q}^{3}+x{q}^{2}+x}{{q}^{8}}g
 \left( {\frac {x}{{q}^{8}}} \right) }-g \left( {\frac {x}{{q}^{10}}}
 \right)} 
\end{align*}
\end{quote}
which is equivalent to \eqref{eqn:recF1} under $x\mapsto xq^{10}$.

For $F_5$:
\begin{quote}
\begin{verbatim}
ttF5:=ColumnOperation(RowOperation(tt,[1,6]),[1,6]);
mmttF5:=murraymiller(ttF5,2);
mmrec(mmttF5[2][1..6,1..6],2,g);
\end{verbatim}
\end{quote}
The output is:
\begin{quote}
\begin{align*}
&\mathtt{{\frac {{x}^{3} \left( x{q}^{2}-{q}^{2}-{x}^{2}+x \right) g \left( x
 \right) }{{q}^{19}}}
 -{\frac {{x}^{3} \left( {q}^{4}+{q}^{3}-x{q}^{2}+
{q}^{2}-xq-x \right) }{{q}^{21}}g \left( {\frac {x}{{q}^{2}}} \right) 
}}\\
&\mathtt{+{\frac { \left( {q}^{9}-{q}^{7}+x{q}^{5}+{q}^{4}-x{q}^{2}-xq-x
 \right) {x}^{2}}{{q}^{20}}g \left( {\frac {x}{{q}^{4}}} \right) }
}\\
&\mathtt{ -{
\frac {x \left( {q}^{10}+{q}^{9}-{q}^{7}+x{q}^{5}+x{q}^{4}+x{q}^{3}-x
 \right) }{{q}^{16}}g \left( {\frac {x}{{q}^{6}}} \right) }
 +{\frac {{q
}^{8}+x{q}^{2}+xq+x}{{q}^{8}}g \left( {\frac {x}{{q}^{8}}} \right) }-g
 \left( {\frac {x}{{q}^{10}}} \right) 
} 
 \end{align*}
\end{quote}
which is equivalent to \eqref{eqn:recF5} under $x\mapsto xq^{10}$.
\section{Proof verification}
\label{sec:proofverification}

We explain the Maple program \texttt{checknandi.mw} that verifies
\eqref{eqn:recS_F1}--\eqref{eqn:recS_F7} as explicit linear combinations of 
fundamental relations \eqref{eqn:reln1}--\eqref{eqn:reln4}.

We begin by defining \eqref{eqn:reln1}--\eqref{eqn:reln4}:
\begin{quote}
\begin{verbatim}
N1 := (A,B,C,D) -> S(A,B,C,D)
                   -x^2*q^(4+A)*(1+q^2)*S(A+8,B+12,C+8,D+4)
                   -S(A+4,B,C,D)
                   +x^4*q^(18+2*A)*S(A+16,B+24,C+16,D+8):

N2 := (A,B,C,D) -> S(A,B,C,D)
                   -S(A,B+2,C,D)
                   -x^3*q^(12+B)*S(A+12,B+24,C+16,D+8): 

N3 := (A,B,C,D) -> S(A,B,C,D)
                   -x^2*q^(6+C)*S(A+8,B+16,C+12,D+6)-S(A,B,C+2,D)
                   -x^2*q^(7+C)*S(A+8,B+16,C+12,D+6)
                   +x^4*q^(25+2*C)*S(A+16,B+32,C+24,D+12):

N4 := (A,B,C,D) -> S(A,B,C,D)
                   -x*q^(2+D)*S(A+4,B+8,C+6,D+4)-S(A,B,C,D+2)
                   -x*q^(3+D)*S(A+4,B+8,C+6,D+4)
                   +x^2*q^(9+2*D)*S(A+8,B+16,C+12,D+8):
\end{verbatim}
\end{quote}
We now read the file that contains a linear combination of 
\eqref{eqn:reln1}--\eqref{eqn:reln4} which when expanded is supposed
to yield \eqref{eqn:recS_F1}.
\begin{quote}
\begin{verbatim}
F1rel := parse(FileTools[Text][ReadFile]("F1.txt")):
\end{verbatim}
\end{quote}
Simply simplifying this entire expression takes too long.
Thus, we collect all like $S$ terms together and simplify the coefficients.
\begin{quote}
\begin{verbatim}
collect(F1rel, S, simplify);
\end{verbatim}
\end{quote}
Naturally, most coefficients are $0$ and get dropped from the expression.
The output is:
\begin{quote}
\begin{verbatim}
-x^3*q^16*(q^11*x+q^9*x+q^8*x-q^3-q-1)*S(18, 26, 17, 8)
+x*q^3*(q^8*x+q^6*x+q^2+q-1)*S(10, 14, 9, 4)
+x^3*q^19*(q^18*x^2-q^10*x-q^8*x+1)*S(22, 32, 21, 10)
+S(2, 2, 1, 0)+(-q^5*x-q^4*x-q^2*x-1)*S(6, 8, 5, 2)
+x^2*q^8*(q^8*x+q^6*x-q^3+q-1)*S(14, 20, 13, 6)
\end{verbatim}
\end{quote}
which is exactly \eqref{eqn:recS_F1}.
We repeat the process for \eqref{eqn:recS_F5}:
\begin{quote}
\begin{verbatim}
F5rel := parse(FileTools[Text][ReadFile]("F5.txt")):
collect(F5rel, S, simplify);
\end{verbatim}
\end{quote}
and the answer matches \eqref{eqn:recS_F5}:
\begin{quote}
\begin{verbatim}
S(0, -2, -2, -1)
-x^2*q^4*(q^11*x-q^8*x-q^7*x-q^6*x+q^5-q^3+1)*S(12, 16, 10, 5)
-(q^10*x+q^9*x+q^8*x-q^2-q-1)*q^11*x^3*S(16, 22, 14, 7)
+(q^8*x+q^7*x+q^6*x-q^3*x+q^3+q^2-1)*q*x*S(8, 10, 6, 3)
+(q^18*x^2-q^10*x-q^8*x+1)*q^13*x^3*S(20, 28, 18, 9)
+(-q^4*x-q^3*x-q^2*x-1)*S(4, 4, 2, 1)
\end{verbatim}
\end{quote}
For \eqref{eqn:recS_F7}:
\begin{quote}
\begin{verbatim}
F7rel := parse(FileTools[Text][ReadFile]("F7.txt")):
collect(F7rel, S, simplify);
\end{verbatim}
\end{quote}
and the answer matches \eqref{eqn:recS_F7}:
\begin{quote}
\begin{verbatim}
S(0, 0, 0, 0)
+(q^5*x+q^4*x+q^3*x-x+1)*q^4*x*S(8, 12, 8, 4)
+x^3*q^17*(q^14*x^2-q^8*x-q^6*x+1)*S(20, 30, 20, 10)
-x^2*q^6*(q^9*x-q^6*x-q^5*x-q^4*x+1)*S(12, 18, 12, 6)
+(-q^4*x-q^3*x-q^2*x-1)*S(4, 6, 4, 2)
-x^3*q^13*(q^8*x+q^7*x+q^6*x-q^2-q-1)*S(16, 24, 16, 8)
\end{verbatim}
\end{quote}

\providecommand{\oldpreprint}[2]{\textsf{arXiv:\mbox{#2}/#1}}\providecommand{\preprint}[2]{\textsf{arXiv:#1
  [\mbox{#2}]}}


\begin{thebibliography}{10}

\bibitem{A}
G.~E. Andrews.
\newblock An analytic generalization of the {R}ogers-{R}amanujan identities for
  odd moduli.
\newblock {\em Proc. Nat. Acad. Sci. U.S.A.}, 71:4082--4085, 1974.

\bibitem{And-mult}
G.~E. Andrews.
\newblock Multiple series {R}ogers-{R}amanujan type identities.
\newblock {\em Pacific J. Math.}, 114(2):267--283, 1984.

\bibitem{And-book}
G.~E. Andrews.
\newblock {\em The theory of partitions}.
\newblock Cambridge Mathematical Library. Cambridge University Press,
  Cambridge, 1998.
\newblock Reprint of the 1976 original.

\bibitem{ASW}
G.~E. Andrews, A.~Schilling, and S.~O. Warnaar.
\newblock An {$A_2$} {B}ailey lemma and {R}ogers-{R}amanujan-type identities.
\newblock {\em J. Amer. Math. Soc.}, 12(3):677--702, 1999.

\bibitem{Bos}
M.~K. Bos.
\newblock Coding the principal character formula for affine {K}ac-{M}oody {L}ie
  algebras.
\newblock {\em Math. Comp.}, 72(244):2001--2012, 2003.

\bibitem{Br}
D.~M. Bressoud.
\newblock Analytic and combinatorial generalizations of the
  {R}ogers-{R}amanujan identities.
\newblock {\em Mem. Amer. Math. Soc.}, 24(227):54, 1980.

\bibitem{BIS}
D.~M. Bressoud, M.~Ismail, and D.~Stanton.
\newblock{Change of Base in Bailey Pairs.}
\newblock{{\em Ramanujan J.} 4 (2000), no. 4, 435–453.}

\bibitem{BriJenMah}
K.~Bringmann, C.~Jennings-Shaffer, and K.~Mahlburg.
\newblock Proofs and reductions of various conjectured partition identities of
  {K}anade and {R}ussell.
\newblock {\em J. Reine Angew. Math.}, 766:109--135, 2020.

\bibitem{CalLM1}
C.~Calinescu, J.~Lepowsky, and A.~Milas.
\newblock Vertex-algebraic structure of the principal subspaces of certain
  {$A_1^{(1)}$}-modules. {I}. {L}evel one case.
\newblock {\em Internat. J. Math.}, 19(1):71--92, 2008.

\bibitem{CalLM2}
C.~Calinescu, J.~Lepowsky, and A.~Milas.
\newblock Vertex-algebraic structure of the principal subspaces of certain
  {$A^{(1)}_1$}-modules. {II}. {H}igher-level case.
\newblock {\em J. Pure Appl. Algebra}, 212(8):1928--1950, 2008.

\bibitem{CalLM3}
C.~Calinescu, J.~Lepowsky, and A.~Milas.
\newblock Vertex-algebraic structure of the principal subspaces of level one
  modules for the untwisted affine {L}ie algebras of types {$A,\ D,\ E$}.
\newblock {\em J. Algebra}, 323(1):167--192, 2010.

\bibitem{CalLM4}
C.~Calinescu, J.~Lepowsky, and A.~Milas.
\newblock Vertex-algebraic structure of principal subspaces of standard
  {$A^{(2)}_2$}-modules, {I}.
\newblock {\em Internat. J. Math.}, 25(7):1450063, 44, 2014.

\bibitem{CalMPe}
C.~Calinescu, A.~Milas, and M.~Penn.
\newblock Vertex algebraic structure of principal subspaces of basic
  {$A_{2n}^{(2)}$}-modules.
\newblock {\em J. Pure Appl. Algebra}, 220(5):1752--1784, 2016.

\bibitem{CPS}
C.~Calinescu, M.~Penn, and C.~Sadowski.
\newblock Presentations of principal subspaces of higher level standard
  {${A}_2^{(2)}$}-modules.
\newblock {\em Algebr. Represent. Theory}, 22(6):1457--1478, 2019.

\bibitem{Cap-1}
S.~Capparelli.
\newblock On some representations of twisted affine {L}ie algebras and
  combinatorial identities.
\newblock {\em J. Algebra}, 154(2):335--355, 1993.

\bibitem{CLM1}
S.~Capparelli, J.~Lepowsky, and A.~Milas.
\newblock The {R}ogers-{R}amanujan recursion and intertwining operators.
\newblock {\em Commun. Contemp. Math.}, 5(6):947--966, 2003.

\bibitem{CLM2}
S.~Capparelli, J.~Lepowsky, and A.~Milas.
\newblock The {R}ogers-{S}elberg recursions, the {G}ordon-{A}ndrews identities
  and intertwining operators.
\newblock {\em Ramanujan J.}, 12(3):379--397, 2006.

\bibitem{Car-book}
R.~W. Carter.
\newblock {\em Lie algebras of finite and affine type}, volume~96 of {\em
  Cambridge Studies in Advanced Mathematics}.
\newblock Cambridge University Press, Cambridge, 2005.

\bibitem{Che-linked}
S.~Chern.
\newblock Linked partition ideals, directed graphs and {$q$}-multi-summations.
\newblock {\em Electron. J. Combin.}, 27(3):Paper No. 3.33, 29, 2020.

\bibitem{CorDouUnc}
S.~Corteel, J.~Dousse, and A.~K. Uncu.
\newblock Cylindric partitions and some new {$A_2$} {R}ogers-{R}amanujan
  identities.
\newblock {\em Proc. Amer. Math. Soc.}, 150(2):481--497, 2022.

\bibitem{FS1}
B.~Feigin and A.~V. Stoyanovsky.
\newblock Quasi-particles models for the representations of lie algebras and
  geometry of flag manifold.
\newblock 1993.

\bibitem{Ga}
F.~Garvan.
\newblock A {$q$}-product tutorial for a {$q$}-series {MAPLE} package.
\newblock volume~42, pages Art. B42d, 27. 1999.
\newblock The Andrews Festschrift (Maratea, 1998).

\bibitem{GriOnoWar}
M.~J. Griffin, K.~Ono, and S.~O. Warnaar.
\newblock A framework of {R}ogers-{R}amanujan identities and their arithmetic
  properties.
\newblock {\em Duke Math. J.}, 165(8):1475--1527, 2016.

\bibitem{KanRus-cylindric}
S.~Kanade and M.~C. Russell.
\newblock Completing the $\mathrm{A}_2$ {A}ndrews--{S}chilling--{W}arnaar
  identities.
\newblock {\em Int. Math. Res. Not. IMRN}.
\newblock Accepted.

\bibitem{KanRus-idf}
S.~Kanade and M.~C. Russell.
\newblock {\tt {I}dentity{F}inder} and some new identities of
  {R}ogers-{R}amanujan type.
\newblock {\em Exp. Math.}, 24(4):419--423, 2015.

\bibitem{KanRus-stair}
S.~Kanade and M.~C. Russell.
\newblock Staircases to analytic sum-sides for many new integer partition
  identities of {R}ogers-{R}amanujan type.
\newblock {\em Electron. J. Combin.}, 26(1):Paper No. 1.6, 33, 2019.

\bibitem{KanRus-a22}
S.~Kanade and M.~C. Russell.
\newblock On {$q$}-series for principal characters of standard
  {$A^{(2)}_2$}-modules.
\newblock {\em Adv. Math.}, 400:Paper No. 108282, 24, 2022.

\bibitem{Kon}
S.~Konenkov.
\newblock Further q-reflections on the modulo 9 {K}anade-{R}ussell
  (conjectural) identities.
\newblock \preprint{2207.04852}{math.NT}.

\bibitem{Kur-Cap}
K.~Kur\c{s}ung\"{o}z.
\newblock Andrews-{G}ordon type series for {C}apparelli's and
  {G}\"{o}llnitz-{G}ordon identities.
\newblock {\em J. Combin. Theory Ser. A}, 165:117--138, 2019.

\bibitem{Kur-KR}
K.~Kur\c{s}ung\"{o}z.
\newblock Andrews-{G}ordon type series for {K}anade-{R}ussell conjectures.
\newblock {\em Ann. Comb.}, 23(3-4):835--888, 2019.

\bibitem{L}
J.~Lepowsky.
\newblock Calculus of twisted vertex operators.
\newblock {\em Proc. Nat. Acad. Sci. U.S.A.}, 82(24):8295--8299, 1985.

\bibitem{LL}
J.~Lepowsky and H.~Li.
\newblock {\em Introduction to vertex operator algebras and their
  representations}, volume 227 of {\em Progress in Mathematics}.
\newblock Birkh\"{a}user Boston, Inc., Boston, MA, 2004.

\bibitem{Nan-thesis}
D.~Nandi.
\newblock {\em Partition identities arising from the standard {A}2 (2)-modules
  of level 4}.
\newblock ProQuest LLC, Ann Arbor, MI, 2014.
\newblock Thesis (Ph.D.)--Rutgers The State University of New Jersey - New
  Brunswick.

\bibitem{PS1}
M.~Penn and C.~Sadowski.
\newblock Vertex-algebraic structure of principal subspaces of basic
  {$D_4^{(3)}$}-modules.
\newblock {\em Ramanujan J.}, 43(3):571--617, 2017.

\bibitem{PS2}
M.~Penn and C.~Sadowski.
\newblock Vertex-algebraic structure of principal subspaces of the basic
  modules for twisted affine {L}ie algebras of type {$A_{2n-1}^{(2)}$},
  {$D_n^{(2)}$}, {$E_6^{(2)}$}.
\newblock {\em J. Algebra}, 496:242--291, 2018.

\bibitem{PSW}
M.~Penn, C.~Sadowski, and G.~Webb.
\newblock Principal subspaces of twisted modules for certain lattice vertex
  operator algebras.
\newblock {\em Internat. J. Math.}, 30(10):1950048, 47, 2019.

\bibitem{Ros}
H.~Rosengren.
\newblock {P}roofs of some partition identities conjectured by {K}anade and
  {R}ussell.
\newblock {\em Ramanujan J.}, 2021.

\bibitem{Rus-thesis}
M.~C. Russell.
\newblock {\em Using experimental mathematics to conjecture and prove theorems
  in the theory of partitions and commutative and non-commutative recurrences}.
\newblock ProQuest LLC, Ann Arbor, MI, 2016.
\newblock Thesis (Ph.D.)--Rutgers The State University of New Jersey - New
  Brunswick.

\bibitem{Sil-book}
A.~V. Sills.
\newblock {\em An invitation to the {R}ogers-{R}amanujan identities}.
\newblock CRC Press, Boca Raton, FL, 2018.
\newblock With a foreword by George E. Andrews.

\bibitem{Sla}
L.~J. Slater.
\newblock A new proof of {R}ogers's transformations of infinite series.
\newblock {\em Proc. London Math. Soc. (2)}, 53:460--475, 1951.

\bibitem{S}
J.~R. Stembridge.
\newblock Hall-{L}ittlewood functions, plane partitions, and the
  {R}ogers-{R}amanujan identities.
\newblock {\em Trans. Amer. Math. Soc.}, 319(2):469--498, 1990.

\bibitem{FS2}
A.~V. Stoyanovski\u{\i} and B.~L. Fe\u{\i}gin.
\newblock Functional models of the representations of current algebras, and
  semi-infinite {S}chubert cells.
\newblock {\em Funktsional. Anal. i Prilozhen.}, 28(1):68--90, 96, 1994.

\bibitem{Ta}
R.~Takenaka.
\newblock Vertex algebraic construction of modules for twisted affine {L}ie
  algebras of type {${A}_{2l}^{(2)}$}.
\newblock \preprint{2205.05271}{math.RT}.

\bibitem{TakTsu-nandi}
M.~Takigiku and S.~Tsuchioka.
\newblock A proof of conjectured partition identities of {N}andi.
\newblock \preprint{1910.12461}{math.CO}.

\bibitem{TakTsu-a22}
M.~Takigiku and S.~Tsuchioka.
\newblock Andrews-{G}ordon type series for the level 5 and 7 standard modules
  of the affine {L}ie algebra {$A^{(2)}_2$}.
\newblock {\em Proc. Amer. Math. Soc.}, 149(7):2763--2776, 2021.

\bibitem{Tsu-a21level3}
S.~Tsuchioka.
\newblock An example of {$A_2$} {R}ogers-{R}amanujan bipartition identities of
  level 3.
\newblock \preprint{2205.04811}{math.RT}.

\bibitem{UncZud}
A.~Uncu and W.~Zudilin.
\newblock Reflecting (on) the modulo 9 {K}anade-{R}ussell (conjectural)
  identities.
\newblock {\em S\'{e}m. Lothar. Combin.}, 85:Art. B85e, 17, [2020--2021].

\bibitem{War1} S.~O. Warnaar.
\newblock The generalized Borwein conjecture. II. Refined $q$-trinomial coefficients
\newblock {\em Discrete Math.} 272 (2003), no. 2-3, 215–258.

\bibitem{War2}
S.~O. Warnaar.
\newblock The {$A_2$} {A}ndrews-{G}ordon identities and cylindric partitions.
\newblock \preprint{2111.07550}{math.CO}.

\end{thebibliography}
\end{document}